\def\loc{\mathrm{loc}}
\newtheorem{theorem}{Theorem}[section]
\newtheorem{lemma}[theorem]{Lemma}
\theoremstyle{definition}
\newtheorem{definition}[theorem]{Definition}
\newtheorem{remark}[theorem]{Remark}
\numberwithin{equation}{section}
\DeclareMathOperator*{\diam}{diam}
\DeclareMathOperator*{\dist}{dist}
\newcommand{\dd}{\mathrm{d}}
\def\Xint#1{\mathchoice
	{\XXint\displaystyle\textstyle{#1}}%
	{\XXint\textstyle\scriptstyle{#1}}%
	{\XXint\scriptstyle\scriptscriptstyle{#1}}%
	{\XXint\scriptscriptstyle\scriptscriptstyle{#1}}%
	\!\int}
\def\XXint#1#2#3{{\setbox0=\hbox{$#1{#2#3}{\int}$ }
		\vcenter{\hbox{$#2#3$ }}\kern-.6\wd0}}
\def\dashint{\Xint-}
\title{Gradient higher integrability for double phase problems on metric measure spaces}
\author{Juha Kinnunen, Antonella Nastasi, Cintia Pacchiano Camacho}
\date{July 2022}
\begin{document}

\address{J.K.: Department of Mathematics, Aalto University, P.O. Box 11100, FI-00076 Aalto, Finland}
\email{juha.k.kinnunen@aalto.fi}

\address{A.N.: Department of Engineering, University of Palermo, Viale delle Scienze, 90128, Palermo, Italy}
\email{antonella.nastasi@unipa.it}

\address{C.P.C.: Department of Mathematics and Statistics, University of Calgary, 2500 University Dr. NW, Calgary, AB T2X 3B5, Canada}
\email{cintia.pacchiano@ucalgary.ca}

\subjclass[2020]{49Q20, 49N60, 31C45, 35J60, 46E35}

\keywords{Quasiminimizers, double phase problems, reverse H\"older inequalities}
\maketitle

\begin{abstract}
We study local and global higher integrability properties for quasiminimizers of a class of double phase integrals characterized by nonstandard growth conditions. We work purely on a variational level in the setting of a metric measure space with a doubling measure and a Poincar\'e inequality. 
The main novelty is an intrinsic approach to double phase Sobolev-Poincar\'e inequalities. 
\end{abstract}
\section{Introduction}

Assume that $(X,d,\mu)$ is a complete metric measure space endowed with a metric $d$ and a doubling measure $\mu$ and supporting a weak $(1,p)$-Poincar\'e inequality.
Let  $\Omega$ be an open subset of $X$. 
This paper discusses regularity properties of the minimal $p$-weak upper gradient of quasiminimizers of the double phase integral 
\begin{equation}\label{J}
\int_{\Omega}H(x,g_{u}) \,\dd \mu
=\int_{\Omega}(g_{u}^p + a(x) g_{u}^q) \,\dd \mu,
\end{equation} 
with
\[
1<\frac{q}{p}\le1+\frac{\alpha}{Q}, 
\quad 0<\alpha\le1, 
\quad Q=\log_2C_D,
\] 
where $p>1$ and $C_D$ is the doubling constant of the measure. 
Observe that $Q$ is a notion of dimension related to the measure $\mu$. 
For example, in the Euclidean $n$-space with the Lebesgue measure we have $Q=n$.
The double phase functional in \eqref{J} is denoted by
\[
H(x,z)= |z|^p+a(x)|z|^q, 
\quad x \in \Omega,\quad z\in \mathbb{R}.
\]
The nonnegative coefficient function $a$ is assumed to be $\alpha$-H\"older continuous with respect to a quasi-distance related to the underlying measure $\mu$, see \eqref{aalpha} below for the precise definition.
This reduces to the standard H\"older continuity with the exponent $\alpha$, if the measure is $Q$-Ahlfors--David regular.

The main feature of the functional \eqref{J} is that it switches between
two different types of growth conditions determined by the coefficient function $a$.
When $a(x)=0$, the variational integral in \eqref{J} reduces to the familiar problem with $p$-growth and when $a(x)\ge c>0$ we have the $(p,q)$-problem.
Thus the zero set $\{a(x)=0\}$ plays a decisive role in \eqref{J}.
The main advantage of the notion of quasiminimizer of \eqref{J} is that it simultaneously covers a large class of problems 
where the variational integrand $F:\Omega\times\mathbb R\times \mathbb R\to\mathbb R$ satisfies the Carath\'eodory conditions and
\[
\lambda H(x,z)\le F(x,u,z)\le\Lambda H(x,z),
\quad 0<\lambda<\Lambda<\infty,
\]
for every $x \in \Omega$ and $u,z\in \mathbb{R}$.
For quasiminimizers with the $p$-growth on Euclidean spaces, see \cite{DT,GG1,GG2}, and on metric measure spaces, see \cite{BB, BMS, KMM, KM, KS}. 
This paper extends the theory for quasiminimizers on metric measure space to the double phase problems.

The natural function space for a local quasiminimizer of \eqref{J} is $u\in N^{1,1}_{\loc}({\Omega})$ with $H(\cdot,g_u)\in L^1_{\loc}(\Omega)$, 
where $N^{1,1}$ denotes the Newtonian--Sobolev space on a metric measure space, see \cite{BB}, \cite{HKS} and \cite{S}. 
We show that if $u$ is a local quasiminimizer of \eqref{J}, then $H(x,g_u)$ is locally integrable to a slightly higher power than one, see Theorem \ref{mainlocal}. 
We also discuss the corresponding question up to the boundary for quasiminiminimizers with boundary values, see Theorem \ref{mainglobal}.
For this kind of local higher integrability results in the Euclidean case, see \cite{B,Ge,GG1,GG2,GM,M,ME}.
For the corresponding global results, we refer to \cite{Gr,KilKos}.
For results with functionals of the type \eqref{J} in the Euclidean setting, we refer to \cite{CM1,CM,DF, DFM, ELM, Ma1, Ma2, Ma3, Min, MinR}. 
Higher integrability questions for variational problems on metric measure spaces have been studied in \cite{FH1,FH2,FHJM, H1,H2,MZG, MM, MMPP, NPC2}.
Our work shows that the corresponding theory can be developed for double phase problems on metric measure spaces. 
The argument is based on energy estimates, double phase Sobolev-Poincar\'e inequalities and a self-improving property of reverse H\"older inequalities.

\section{Preliminaries}\label{Sec2}
Throughout the paper, positive constants are denoted by $C$ and 
the dependencies on parameters are listed in the parentheses.
We assume that $(X, d, \mu)$ is a complete metric measure space with a metric $d$ and a Borel regular measure $\mu$. 
The measure $\mu$ is assumed to be doubling, that is, there exists a constant $C_D \geq 1$ such that 
\begin{equation}\label{doubling}
0<\mu(B_{2r})\leq C_D \mu(B_r)<\infty,
\end{equation} 
for every ball $B_r$ in $X$.
Here $B_r=B_r(x)=\{x\in X:d(y,x)<r\}$ is an open ball with the center $x\in X$ and the radius $0<r<\infty$. 
The following result gives a notion of dimension related to a doubling measure.

\begin{lemma}[\cite{BB}, Lemma 3.3]\label{lemm3.3}
	Let $(X, d, \mu)$ be a metric measure space with a doubling measure $\mu$. Then 
	\begin{equation}\label{s}
		\frac{\mu(B_{r}(y))}{\mu(B_R(x))}\geq C\left(\frac{r}{R}\right)^Q,
	\end{equation}
	for every $0<r\le R<\infty$, $x \in X$ and $y \in B_R(x)$. Here $Q=\log_2C_D$ and $C=C_D^{-2}$.
\end{lemma}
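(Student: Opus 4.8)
The plan is to exploit the doubling condition \eqref{doubling} by iterating it and combining it with an elementary inclusion of balls. First I would observe that since $y\in B_R(x)$, the triangle inequality gives $B_R(x)\subseteq B_{2R}(y)$: indeed, if $z\in B_R(x)$ then $d(z,y)\le d(z,x)+d(x,y)<R+R=2R$. By monotonicity of $\mu$ this yields $\mu(B_R(x))\le\mu(B_{2R}(y))$, which reduces the problem to comparing $\mu(B_{2R}(y))$ with $\mu(B_r(y))$, i.e., to a statement about concentric balls.

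Next I would choose the smallest nonnegative integer $k$ with $2^k r\ge 2R$; explicitly $k=\lceil\log_2(2R/r)\rceil$, and since $r\le R$ one has $1\le k<2+\log_2(R/r)$. Then $B_{2R}(y)\subseteq B_{2^k r}(y)$, and applying \eqref{doubling} successively $k$ times starting from the ball $B_r(y)$ gives $\mu(B_{2^k r}(y))\le C_D^{\,k}\,\mu(B_r(y))$. Chaining these estimates produces $\mu(B_R(x))\le C_D^{\,k}\,\mu(B_r(y))$, hence
\[
\frac{\mu(B_r(y))}{\mu(B_R(x))}\ge C_D^{-k}.
\]

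Finally I would convert the bound $C_D^{-k}$ into the claimed power of $r/R$. Using $k<2+\log_2(R/r)$ together with $C_D=2^{Q}$, where $Q=\log_2 C_D$, we obtain $C_D^{\,k}<C_D^{2}\,C_D^{\log_2(R/r)}=C_D^{2}\,2^{Q\log_2(R/r)}=C_D^{2}(R/r)^{Q}$, so $C_D^{-k}>C_D^{-2}(r/R)^{Q}$, which is \eqref{s} with $C=C_D^{-2}$. I do not expect any genuine obstacle here: the only point requiring a little care is the bookkeeping in the choice of $k$, so that the final constant comes out to exactly $C_D^{-2}$ rather than some larger-exponent power of $C_D$, together with checking that the estimate degrades gracefully in the borderline case $r=R$, where $k=1$ and the inequality reads $\mu(B_r(y))/\mu(B_R(x))\ge C_D^{-1}\ge C_D^{-2}$.
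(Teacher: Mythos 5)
Your argument is correct and is exactly the standard iteration-of-doubling proof of this fact; the paper itself gives no proof, citing Lemma 3.3 of Bj\"orn--Bj\"orn, whose argument is the same as yours (enlarge $B_R(x)$ to the concentric ball $B_{2R}(y)$, iterate \eqref{doubling} $k=\lceil\log_2(2R/r)\rceil$ times, and convert $C_D^{-k}$ into $C_D^{-2}(r/R)^Q$ via $C_D=2^Q$). Your bookkeeping for $k$ and the check of the borderline case $r=R$ are both accurate, so nothing is missing.
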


A complete metric measure space with a doubling measure is proper, that is, closed and bounded subsets are compact, see \cite[Proposition 3.1]{BB}.
We discuss the notion of upper gradient as a way to generalize modulus of the gradient in the Euclidean case to the metric setting. 
For further details, we refer to the book by  Bj\"{o}rn and Bj\"{o}rn \cite{BB}.

\begin{definition}
	A nonnegative Borel function $g$ is said to be an upper gradient of function $u: X \to [-\infty,\infty]$ if, for all paths $\gamma$ connecting $x$ and $y$, we have 
	\begin{equation*}
		|u(x)-u(y)|\leq \int_{\gamma}g\, \dd s, 
	\end{equation*}
	whenever $u(x)$ and $u(y)$ are both finite and $\int_{\gamma}g \, \dd s= \infty$ otherwise. Here $x$ and $y$ are the endpoints of $\gamma$.
	Moreover, if a nonnegative measurable function $g$ satisfies the inequality above for $p$-almost every path, 
	that is, with the exception of a path family of zero $p$-modulus, then $g$ is called a $p$-weak upper gradient of $u$.
\end{definition}

For $1\leq p<\infty$ and an open set $\Omega\subset X$, let
\[
\Vert u\Vert_{N^{1,p}(\Omega)}=\Vert u\Vert_{L^{p}(\Omega)}+\inf\Vert g\Vert_{L^{p}(\Omega)},
\]
where the infimum is taken over all upper gradients $g$ of $u$.
Consider the collection of functions $u\in L^p(\Omega)$ with an upper gradient $g\in L^p(\Omega)$ and let
\begin{equation*}
	\widetilde{N}^{1,p}(\Omega)
	=\lbrace u:\Vert u\Vert_{N^{1,p}(\Omega)}<\infty\rbrace.
\end{equation*}
The Newtonian space is defined by
\[
N^{1,p}(\Omega)=\lbrace u:\Vert u\Vert_{N^{1,p}(\Omega)}<\infty\rbrace/\sim,
\]
where $u\sim v$ if and only if $\Vert u-v\Vert_{N^{1,p}(\Omega)}=0$.

The corresponding local Newtonian space is defined by $u\in N^{1,p}_{\loc}(\Omega)$ if
$u\in N^{1,p}(\Omega')$ for all $\Omega'\Subset \Omega$, see \cite[Proposition 2.29]{BB},
where $\Omega'\Subset \Omega$ means that $\overline{\Omega'}$ is a compact subset of $\Omega$.
If $u$ has an upper gradient $g\in L^p(\Omega)$, there exists a unique minimal $p$-weak upper gradient $g_u\in L^p(\Omega)$ with
$g_u\le g$ $\mu$-almost everywhere for all $p$-weak upper gradients $g\in L^p(\Omega)$ of $u$, see  \cite[Theorem 2.5]{BB}.
Moreover, the minimal $p$-weak upper gradient is unique up to sets of measure zero.
For $u\in N^{1,p}(\Omega)$ we have
\[
\Vert u\Vert_{N^{1,p}(\Omega)}=\Vert u\Vert_{L^{p}(\Omega)}+\Vert g_u\Vert_{L^{p}(\Omega)},
\]
where $g_u$ is the minimal $p$-weak upper gradient of $u$.
The main advantage is that $p$-weak upper gradients behave better under $L^p$-convergence than upper gradients, see \cite[Proposition 2.2]{BB}.
However, the difference is relatively small, since every $p$-weak upper gradient can be approximated by a sequence of upper gradients in $L^p$, see \cite[Lemma 1.46]{BB}.
This implies that the $N^{1,p}$-norm above remains the same if the infimum is taken over upper gradients instead of $p$-weak upper gradients.

Let $\Omega$ be an open subset of $X$. We define $N^{1,q}_0(\Omega)$ to be the set of functions $u\in N^{1,q}(X)$ that are zero on $X\setminus\Omega$ $\mu$-a.e. The space $N_0^{1,q}(\Omega)$ is equipped with the norm $\Vert\cdot\Vert_{N^{1,q}}$. Note also that if $\mu(X\setminus\Omega) = 0$, then $N^{1,q}_0(\Omega)=N^{1,q}(X)$. We shall therefore always assume that $\mu(X \setminus \Omega) > 0$.

The integral average is denoted by
\[
u_{B}=\dashint_{B} u\, \dd\mu
=\frac{1}{\mu(B)}\int_{B}u\,\dd\mu.
\]
We assume that $X$ supports the following Poincar\'{e} inequality.

\begin{definition}
Let $1\le p<\infty$. 
A metric measure space $(X,d,\mu)$ supports a weak $(1, p)$-Poincar\'{e} inequality if there exist a constant $C_{PI}$ and a dilation factor $\lambda \geq 1 $ such that 
\[
	\dashint_{B_r} |u-u_{B_r}|\, \dd\mu
	\leq C_{PI} r \left(\dashint_{B_{\lambda r}}g_u^p \, \dd\mu\right)^{\frac{1}{p}},
\]
for every ball $B_r$ in $X$ and for every $u\in L^1_{\loc}(X)$.
\end{definition}

As shown in \cite[ Theorem 1.0.1 ]{KZ} by Keith and Zhong, see also \cite[Theorem 4.30]{BB}, the Poincar\'{e} inequality is a self-improving property.

\begin{theorem}\label{kz}
	Let $(X, d, \mu)$ be a complete metric measure space with a doubling measure $\mu$ and a weak $(1,p)$-Poincar\'{e} inequality with $p>1$.
	Then there exists $\varepsilon>0$ such that $X$ supports a weak $(1, q)$-Poincar\'{e} inequality for every $q>p-\varepsilon$. 
	Here, $\varepsilon$ and the constants associated with the $(1, q)$-Poincar\'e inequality depend only on $C_D$, $C_{PI}$ and $p$.
\end{theorem}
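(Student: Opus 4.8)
This is the theorem of Keith and Zhong; its proof is long, so I only sketch the strategy one would follow. Fix a ball $B_0 = B(x_0,r_0)$, a function $u \in L^1_{\loc}(X)$ with an upper gradient $g$, and a target exponent $q<p$ (the case $q \ge p$ being immediate from H\"older's inequality and the assumed $(1,p)$-Poincar\'e inequality). By the doubling property (Lemma \ref{lemm3.3}) and a routine normalization it suffices to establish
\[
\dashint_{B_0} |u - u_{B_0}|\,\dd\mu \le C r_0 \Bigl(\dashint_{\Lambda B_0} g^q \,\dd\mu\Bigr)^{1/q}
\]
for a dilation factor $\Lambda$ and a constant $C$ depending only on $C_D$, $C_{PI}$, $\lambda$ and $p$. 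Since $X$ is complete, doubling, and supports a $(1,p)$-Poincar\'e inequality, it is quasiconvex (a standard consequence of these hypotheses), so any two points can be joined by a curve of length comparable to their distance; this is what will make the telescoping below work. The plan is a Calder\'on--Zygmund ``good-$t$'' truncation performed at the level of the gradient: for a threshold $t>0$ let $E_t \subset B_0$ be the \emph{bad set} on which a suitably restricted maximal function of $g^p$ exceeds $t^p$, and let $B_0 \setminus E_t$ be the \emph{good set}.

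First I would construct a Lipschitz approximation of $u$. Telescoping the assumed Poincar\'e inequality over dyadic concentric balls, together with the Lebesgue differentiation theorem (valid on doubling spaces), yields the pointwise estimate showing that $u$ restricted to the good set agrees $\mu$-a.e.\ with a function that is Lipschitz with constant comparable to $t$ at scale $r_0$; a McShane-type extension then produces $u_t$ on $B_0$, Lipschitz with constant $\approx t$, such that $\{u \ne u_t\}$ lies in $E_t$ up to a null set. Next I would cover $E_t$ by a Whitney-type family of balls $\{B_i\}$ whose radii are comparable to their distances from $B_0 \setminus E_t$ and with bounded overlap of fixed dilates (obtained from a Vitali covering and doubling), apply the $(1,p)$-Poincar\'e inequality on each $B_i$, and telescope along chains of Whitney balls reaching the good set, using quasiconvexity to keep the chains short. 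This bounds the approximation error $\int_{E_t}|u - u_t|\,\dd\mu$ by $t^{1-p}$ times an integral of $g^p$ over a fixed dilate of $E_t$.

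Then, writing
\[
\dashint_{B_0} |u - u_{B_0}|\,\dd\mu \le \dashint_{B_0} |u - u_{t_0}|\,\dd\mu + \dashint_{B_0} |u_{t_0} - (u_{t_0})_{B_0}|\,\dd\mu + |(u_{t_0})_{B_0} - u_{B_0}|,
\]
the middle term is $\lesssim t_0 r_0$ because $u_{t_0}$ is $t_0$-Lipschitz, while the two error terms are controlled by summing the estimate of the previous step over a geometric sequence of thresholds $t_k = 2^k t_0$. Choosing $t_0 \approx \bigl(\dashint_{\Lambda B_0} g^q \,\dd\mu\bigr)^{1/q}$, the resulting series converges precisely because the maximal operator attached to the restricted maximal function is bounded on $L^{p/q}(\Lambda B_0)$; this requires $p/q>1$, and only a finite, quantifiable amount of this boundedness ``room'' is available, which is exactly the source of the self-improvement: the admissible exponents $q$ extend an amount $\varepsilon>0$ below $p$. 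A short absorption argument closes the estimate, with $\varepsilon$ and all constants depending only on $C_D$, $C_{PI}$, $\lambda$ and $p$.

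The main obstacle is the interplay of the first two steps with the last one. The Lipschitz approximant must be produced with the \emph{correct, small} constant, and its error must be controlled by the maximal function of $g^p$ --- all carried out through Vitali coverings, quasiconvex chains and telescoping, since there are no Euclidean dyadic cubes available. Equally delicate is organizing the dyadic summation so that the integrability loss incurred in passing from the exponent $p$ to $q$ is strictly smaller than the gain afforded by the $L^{p/q}$-boundedness of the maximal operator; quantifying this surplus, and hence the actual value of $\varepsilon$, in terms of $C_D$ and $C_{PI}$ alone is the delicate heart of the argument.
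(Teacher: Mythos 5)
The paper does not prove this statement: Theorem \ref{kz} is imported verbatim from Keith and Zhong \cite[Theorem 1.0.1]{KZ} (see also \cite[Theorem 4.30]{BB}), so there is no internal proof to compare your attempt against. Your sketch is a fair high-level description of the known argument: the reduction to $q<p$ via H\"older, the use of quasiconvexity (which does follow from completeness, doubling and a $(1,p)$-Poincar\'e inequality), the level-set decomposition by a restricted maximal function of $g^p$, the $Ct$-Lipschitz behaviour of $u$ on the good set obtained by telescoping the Poincar\'e inequality, the McShane extension, and the Whitney/chaining estimate of the error on the bad set are all genuine components of the Keith--Zhong proof and of its later expositions.

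However, as a proof it has a genuine gap exactly where the theorem is hard. The scheme you describe --- Lipschitz truncation plus $L^{p/q}$-boundedness of the maximal operator plus a dyadic sum over thresholds $t_k=2^kt_0$ --- reproduces, on its face, only the $(1,p)$ inequality one started from; boundedness of the maximal operator on $L^{p/q}$ holds for \emph{every} $q<p$ and by itself produces no $\varepsilon$. The actual self-improvement in Keith--Zhong comes from a quantitative absorption: one introduces a truncated (hence a priori finite) best constant $A_q$ for the $(1,q)$ inequality, derives an estimate of the form $A_q\le C+\theta A_q$ with $\theta<1$ for $q$ sufficiently close to $p$, and only then absorbs. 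Your phrase ``only a finite, quantifiable amount of this boundedness room is available'' gestures at this but does not identify the quantity that is absorbed, why it is finite before absorption, or why the coefficient in front of it is strictly less than one for some $q<p$; that is the heart of the theorem and is missing. A minor further point: the constants necessarily also depend on the dilation factor $\lambda$ of the assumed weak Poincar\'e inequality, which you include but the paper's statement suppresses.
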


The following result shows that the Poincar\'e inequality implies a Sobolev--Poincar\'e inequality, 
see \cite[Theorem 4.21 and Corollary 4.26]{BB}.

\begin{theorem}\label{sstars}
	Assume that $\mu$ is a doubling measure and $X$ supports a weak $(1,p)$-Poincar\'{e} inequality and let $Q=\log_2C_D$ be as in \eqref{s}.  
	Let $1\le p^*\le\frac{Qp}{Q-p}$ for $1\le p<Q$ and $1\leq p^*<\infty$ for $Q\leq p<\infty$.
	Then $X$ supports a weak $(p^*,p)$-Poincar\'{e} inequality,
	that is, there exist a constant $C=C(C_D,C_{PI},p)$ such that
	\[
		\left(\dashint_{B_r} |u-u_{B_r}|^{p^*} \,\dd\mu\right)^{\frac{1}{p^*}}
		\leq Cr\left(\dashint_{B_{2\lambda r}}g_u^p \,\dd\mu\right)^{\frac{1}{p}},
	\]
	for every ball $B_r$ in $X$ and every $u \in L^1_{\loc}(X)$. 
\end{theorem}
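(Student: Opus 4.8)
The plan is to run the classical chaining argument of Haj\l asz and Koskela: turn the weak $(1,p)$-Poincar\'e inequality into a pointwise estimate by a discrete Riesz-type potential, collapse that potential into a restricted maximal function (this is the step where the dimension $Q$ and the exponent $p^*=\frac{Qp}{Q-p}$ enter), read off a weak-type Sobolev--Poincar\'e inequality from the weak $(1,1)$ bound for the maximal operator, and finally upgrade to the stated strong-type inequality by Maz'ya's truncation trick. I would treat $1\le p<Q$ first and reduce the case $p\ge Q$ to it at the end.

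First I would derive a pointwise estimate. Fix $B_r=B_r(x_0)$ and a Lebesgue point $x\in B_r$ of $u$ (these have full measure since $\mu$ is doubling). Applying the weak $(1,p)$-Poincar\'e inequality along the chain $B_i=B(x,2^{-i}r)$, $i\ge0$, and using $\mu(B_i)\le C_D\mu(B_{i+1})$ to estimate $|u_{B_i}-u_{B_{i+1}}|$, then bridging $B(x,r)$ to $B_r(x_0)$ through the ball $B(x_0,2r)$ and summing the telescoping series (so that the dilate which appears is exactly $B_{2\lambda r}(x_0)$), I obtain for $\mu$-a.e.\ $x\in B_r$
\[
|u(x)-u_{B_r}|\le C\sum_{i\ge0}2^{-i}r\left(\dashint_{B(x,\lambda2^{-i}r)}g_u^{p}\,\dd\mu\right)^{1/p},
\]
with every dilated ball on the right contained in $B_{2\lambda r}(x_0)$.

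Next I would collapse this potential. By Lemma \ref{lemm3.3}, monotonicity of $\mu$ and \eqref{doubling} one has $\mu(B(x,\lambda2^{-i}r))\ge C\,2^{-iQ}\mu(B_r)$ and $\mu(B_{2\lambda r})\le C\mu(B_r)$, so the averaged integral over $B(x,\lambda2^{-i}r)$ is at most $C\,2^{iQ/p}(\dashint_{B_{2\lambda r}}g_u^{p}\,\dd\mu)^{1/p}$. Splitting the sum at a free index $k$, bounding the tail $i\ge k$ by the restricted maximal function $\mathcal Mf(x)=\sup_{0<\rho\le\lambda r}\dashint_{B(x,\rho)}|f|\,\dd\mu$, the head $i<k$ by the previous bound, and optimizing in $k$---here $p<Q$ makes both geometric sums concentrate on their extreme terms---I expect
\[
|u(x)-u_{B_r}|\le C\,r\left(\dashint_{B_{2\lambda r}}g_u^{p}\,\dd\mu\right)^{1/Q}\bigl(\mathcal M(g_u^{p}\chi_{B_{2\lambda r}})(x)\bigr)^{1/p^*},\qquad p^*=\frac{Qp}{Q-p}.
\]
Feeding in the weak $(1,1)$ inequality for $\mathcal M$ and using $1+\tfrac{p^*}{Q}=\tfrac{p^*}{p}$ then gives, for every $t>0$, the weak-type inequality
\[
\mu\bigl(\{x\in B_r:|u(x)-u_{B_r}|>t\}\bigr)\le C\,\frac{r^{p^*}\mu(B_r)}{t^{p^*}}\left(\dashint_{B_{2\lambda r}}g_u^{p}\,\dd\mu\right)^{p^*/p}.
\]

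Finally I would pass to strong type and settle $p\ge Q$. The two displays above, being derived from the Poincar\'e inequality alone, apply with $u$ replaced by the truncations $v_j=\min\{(|u-u_{B_r}|-2^{j})_+,\,2^{j}\}$, $j\in\mathbb Z$, which lie in $N^{1,p}$ with minimal $p$-weak upper gradient $g_u\chi_{\{2^{j}<|u-u_{B_r}|<2^{j+1}\}}$. Applying the weak inequality to each $v_j$ (centered at its own mean), summing the resulting estimates for $\mu(\{|u-u_{B_r}|>2^{j+1}\})$ over $j$, and using that the sets $\{2^{j}<|u-u_{B_r}|<2^{j+1}\}$ are disjoint together with $p^*\ge p$ (so that $\sum_j c_j^{p^*/p}\le(\sum_j c_j)^{p^*/p}$), I obtain
\[
\left(\dashint_{B_r}|u-u_{B_r}|^{p^*}\,\dd\mu\right)^{1/p^*}\le C\,r\left(\dashint_{B_{2\lambda r}}g_u^{p}\,\dd\mu\right)^{1/p};
\]
the exponents $1\le p^*<\tfrac{Qp}{Q-p}$ then follow by H\"older's inequality. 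For $p\ge Q$ I would note that Lemma \ref{lemm3.3} remains valid with $Q$ replaced by any $\widetilde Q\ge Q$, so the whole argument runs with a chosen $\widetilde Q>p$ in place of $Q$ and yields the inequality with exponent $\tfrac{\widetilde Q p}{\widetilde Q-p}$; letting $\widetilde Q\downarrow p$ this covers every finite $p^*$. The main obstacle is precisely this last upgrade: at the sharp exponent $p^*=\tfrac{Qp}{Q-p}$ one cannot interpolate against a trivial $L^\infty$ bound, since $g_u^{p}$ lies only in $L^1$ and $\mathcal M$ is unbounded on $L^1$, so Maz'ya's truncation argument is genuinely needed, with some care required in handling the means of the truncations and checking their upper-gradient property.
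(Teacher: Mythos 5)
The paper does not prove Theorem \ref{sstars}; it cites \cite[Theorem 4.21 and Corollary 4.26]{BB}, whose proof is precisely the Haj\l asz--Koskela chain: pointwise telescoping estimate, collapse to a restricted maximal function, weak-type bound, and Maz'ya truncation to reach the endpoint exponent. Your outline reproduces this argument correctly, including the exponent bookkeeping $1+\tfrac{p^*}{Q}=\tfrac{p^*}{p}$ and the reduction of $p\ge Q$ to a larger $\widetilde Q$, so it is essentially the same approach as the cited source.
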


The following notation and assumptions will be used throughout the paper.
For the coefficient function $a:X\to[0,\infty)$ in \eqref{J}, we assume that there exists $\alpha$, $0<\alpha\le1$, such that
\begin{equation}\label{aalpha}
	[a]_{\alpha}= \sup_{x,y \in \Omega, x\neq y} \dfrac{|a(x)-a(y)|}{\delta_{\mu}(x,y)^{\alpha}}<\infty,
\end{equation}
where $\delta_{\mu}$ is a quasi-distance given by 
\[
\delta_{\mu}(x,y)=\bigl(\mu(B_{d(x,y)}(x))+\mu(B_{d(x,y)}(y))\bigr)^{1/Q},\quad x,y \in X,\, x\ne y.
\]
Here $Q=\log_2C_D$ is as in \eqref{s} and we set $\delta_{\mu}(x,x)=0$.

\begin{remark}
A measure is called Ahlfors--David regular, if there exist constants $0<C_1\le C_2<\infty$ such that
\begin{equation}\label{ahlfors}
C_1r^Q\le\mu(B_r(x))\le C_2r^Q,
\end{equation}
for every $x\in X$ and $0<r\le\diam(X)$.
If the measure $\mu$ is Ahlfors--David regular, then $\delta_{\mu}(x,y)\approx d(x,y)$ for every $x,y$ and, consequently,
$[a]_\alpha<\infty$ if and only if $a$ is H\"older continuous with the exponent $\alpha$.
\end{remark}

We assume that 
\begin{equation}\label{pqcond}
1<\frac{q}{p}\le1+\frac{\alpha}{Q},
\end{equation}
where $p>1$, $\alpha$ is as in \eqref{aalpha} and  $Q=\log_2C_D$ is as in \eqref{s}.

By \eqref{pqcond}, Theorem \ref{kz} and Theorem \ref{sstars} there exists $s=s(C_D,p,q)$, with $1<s<p<q<s^*$, such that $X$ supports a $(s^*,s)$-Poincar\'e inequality, that is,
\begin{equation}\label{spoincare1}
\left(\dashint_{B_r} \left|u-u_{B_r}\right|^{s^*}\,\dd\mu\right)^{\frac{1}{s^*}} 
\leq Cr\left(\dashint_{B_{2\lambda r}} g_u^{s}\,\dd\mu\right)^{\frac{1}{s}},
\end{equation}
for every ball $B_r$ in $X$ and every $u \in L^1_{\loc}(X)$ with $C=C(C_D,C_{PI},\lambda,p,q)$.
We keep track on dependencies and denote
\[
C(\mathrm{data})=C(C_D,C_{PI},\lambda,p,q,K,\alpha,[a]_\alpha).
\]
Here $K$ is the quasimimizing constant in Definition \ref{lqm} below.
By the structure of a double phase functional we have $g_u\in L^p(\Omega)$. 
 However, we cannot conclude that $g_u\in L^q(\Omega)$, since the function $a$ may be zero on a subset of $\Omega$.
Next we discuss the definition of a local quasiminimizer.

\begin{definition}\label{lqm}
	A function $u\in N^{1,1}_{\loc}({\Omega})$ with $H(\cdot,g_u)\in L^1_{\loc}(\Omega)$ is a local quasiminimizer on $\Omega$, if there exists a constant $K\geq 1$ such that 
	\[
		\int_{\Omega'\cap\{u\ne v\}}H(x,g_u)\,\dd \mu
		\leq K\int_{\Omega'\cap\{u\ne v\}}H(x,g_v)\,\dd \mu,
\]
	for every open subset $\Omega'\Subset\Omega$ and for every function $v\in N^{1,1}(\Omega')$ with $u-v\in  N^{1,1}_0(\Omega')$.
\end{definition}

Then we give a definition of quasiminimizers with boundary values.

\begin{definition}\label{qmbv}
Let $w\in N^{1,1}(\Omega)$ with $H(\cdot,g_w)\in L^1(\Omega)$.
	A function $u\in N^{1,1}({\Omega})$  with $H(\cdot,g_u)\in L^1(\Omega)$ is a quasiminimizer on $\Omega$ with the boundary values $w$,  
	if $u-w\in N^{1,1}_0(\Omega)$ and there exists a constant $K\geq 1$ such that 
	\[
		\int_{\Omega'\cap\{u\ne v\}}H(x,g_u)\, \dd \mu
		\leq K\int_{\Omega'\cap\{u\ne v\}}H(x,g_v)\, \dd \mu,
	\]
	for every open subset $\Omega'\subset\Omega$ and for every function $v \in N^{1,1}(\Omega')$ with $u-v\in  N^{1,1}_0(\Omega')$.
\end{definition}

The main difference in the definitions above is that the assumption $u\in N^{1,1}_{\loc}({\Omega})$ with $H(\cdot,g_u)\in L^1_{\loc}(\Omega)$ in the local case 
is replaced with $u\in N^{1,1}({\Omega})$  with $H(\cdot,g_u)\in L^1(\Omega)$.
It is obvious that a quasiminimizer with boundary values is a local quasiminimizer. 

We state a local energy estimate for the double phase problem.

\begin{lemma}\label{DeGiorgiLemma1}
Assume that  $u\in N^{1,1}_{\loc}({\Omega})$ with $H(\cdot,g_u)\in L^1_{\loc}(\Omega)$  is a local quasiminimizer in $\Omega$ and let $B_r\subset B_R\Subset\Omega$ be concentric balls. Then there exists a constant $C= C(K,q)$ such that  
\[
\int_{B_r}H(x,g_u)\, \dd \mu
\leq C\int_{B_R} H\left(x,\frac{u-u_{B_R}}{R-r}\right)\,\dd\mu.
\]
\end{lemma}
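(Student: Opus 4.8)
The plan is to use the standard Caccioppoli / De Giorgi argument adapted to the double phase functional. The idea is to test the quasiminimizing inequality against $v = u - \eta(u - u_{B_R})$, where $\eta$ is a cutoff function with $0 \le \eta \le 1$, $\eta = 1$ on $B_r$, $\eta = 0$ outside $B_R$, and with upper gradient $g_\eta \le C/(R-r)$; such a Lipschitz cutoff exists in the metric setting since the space is proper and supports a Poincar\'e inequality. Then $u - v = \eta(u - u_{B_R}) \in N^{1,1}_0(B_R)$, so $v$ is an admissible competitor on $\Omega' = B_R$. The set $\{u \ne v\}$ is contained in $\{\eta \ne 0\} \subset B_R$, and on $B_r$ we have $\eta \equiv 1$, so the quasiminimizing inequality gives
\[
\int_{B_r} H(x, g_u)\,\dd\mu \le \int_{\{u \ne v\}} H(x, g_u)\,\dd\mu \le K \int_{\{u \ne v\}} H(x, g_v)\,\dd\mu \le K \int_{B_R} H(x, g_v)\,\dd\mu.
\]

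Next I would estimate $g_v$. Using the Leibniz-type product rule for upper gradients on metric spaces, a $1$-weak upper gradient of $v = u - \eta(u - u_{B_R})$ is
\[
g_v \le (1 - \eta) g_u + |u - u_{B_R}|\, g_\eta \le g_u + \frac{C}{R - r}\,|u - u_{B_R}|
\]
almost everywhere (with the first term supported in $\{\eta < 1\} = B_R \setminus B_r$). Plugging this into $H(x, \cdot) = |\cdot|^p + a(x)|\cdot|^q$ and using the elementary inequality $(s + t)^p \le 2^{p-1}(s^p + t^p)$ (and similarly for the $q$-power), together with subadditivity of $z \mapsto H(x,z)$ up to the constant $2^{q-1}$, one obtains
\[
\int_{B_R} H(x, g_v)\,\dd\mu \le C(q) \int_{B_R \setminus B_r} H(x, g_u)\,\dd\mu + C(q) \int_{B_R} H\!\left(x, \frac{u - u_{B_R}}{R - r}\right)\dd\mu.
\]
Combining this with the previous display yields
\[
\int_{B_r} H(x, g_u)\,\dd\mu \le C(K,q) \int_{B_R \setminus B_r} H(x, g_u)\,\dd\mu + C(K,q) \int_{B_R} H\!\left(x, \frac{u - u_{B_R}}{R - r}\right)\dd\mu.
\]

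Finally I would remove the "bad" term $\int_{B_R \setminus B_r} H(x, g_u)$ on the right-hand side by the standard hole-filling (Widman iteration) trick: add $C(K,q)\int_{B_r} H(x, g_u)\,\dd\mu$ to both sides to get, with $\theta = C(K,q)/(1 + C(K,q)) < 1$,
\[
\int_{B_r} H(x, g_u)\,\dd\mu \le \theta \int_{B_R} H(x, g_u)\,\dd\mu + C(K,q)\int_{B_R} H\!\left(x, \frac{u - u_{B_R}}{R - r}\right)\dd\mu,
\]
and then apply the classical iteration lemma (if $\phi(t) \le \theta\,\phi(s) + A(s)$ with $\theta<1$ for all $r \le t < s \le R$, where here one needs to be slightly careful because the second term depends on the annulus, so one uses it in the form with radii $r \le t < s \le R$ and the denominator $(s-t)$) to absorb the first term and conclude with a constant $C(K,q)$. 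The main obstacle is the bookkeeping in this last absorption step: since the "forcing" term $H(x,(u-u_{B_R})/(R-r))$ itself involves the radii, one must run the iteration with a family of intermediate radii and check that the $q$-dependence of all the constants stays under control; one also needs to verify that the product rule for upper gradients and the existence of the Lipschitz cutoff are available at the level of $N^{1,1}_{\loc}$ regularity assumed here, which is routine given the hypotheses but should be stated carefully. A minor point worth noting is that one should replace $u_{B_R}$ consistently (the statement uses $u_{B_R}$ on the right-hand side, matching the choice of competitor), so no further manipulation of the average is needed.
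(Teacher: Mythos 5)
Your proposal is correct and follows essentially the same route as the paper: the same competitor $v=u-\eta(u-u_{B_R})$ with a $(R-r)^{-1}$-Lipschitz cutoff, the same Leibniz-rule bound $g_v\le (1-\eta)g_u+|u-u_{B_R}|g_\eta$, the same splitting of $H$ with a constant $2^q$, and the same hole-filling followed by the standard iteration lemma (the paper cites Giusti, Lemma 6.1, writing the forcing term with explicit powers $(R-r)^{-p}$ and $(R-r)^{-q}$ exactly as your bookkeeping remark anticipates). Your observation that $u_{B_R}$ can be kept fixed throughout the iteration is also how the paper handles it, so no gap remains.
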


\begin{proof}
Let $\eta$ be a $(R-r)^{-1}$-Lipschitz cutoff function such that $0\leq \eta \leq1$, $\eta=1$ on $B_r$ and $\eta=0$ in $X\setminus B_R$.
Let $v=u- \eta(u-u_{B_R})$.
By the Leibniz rule for the upper gradients (\cite{BB}, Lemma 2.18]), we have
\begin{equation*}
g_{v}\leq |u-u_{B_R}|g_{\eta}+(1-\eta) g_u
\le\dfrac{|u-u_{B_R}|}{R-r}+(1-\chi_{B_{r}})g_{u}.
\end{equation*}
Since $u$ is a local quasiminimizer and $u-v\in N^{1,1}_0(B_R)$, by Definition \ref{lqm} we obtain
\begin{equation}\label{5.7}
\begin{split}
\int_{B_{r}}H(x,g_u)\, \dd \mu  
&\leq\int_{B_{R}}H(x,g_u)\, \dd\mu 
\leq K\int_{B_{R}}H(x,g_v)\, \dd\mu \\
& \leq 2^qK\left(\int_{B_{R}}H\left(x,\frac{u-u_{B_R}}{R-r}\right)\,\dd\mu
+\int_{B_{R}\setminus B_{r}}H(x,g_u)\, \dd \mu\right).
\end{split}
\end{equation}	
By adding $K 2^{q}\int_{B_{r}}H(x,g_u)\, \dd \mu$ to the both sides of (\ref{5.7}), we get 
\[
(1+K 2^{q}) \int_{B_{r}}H(x,g_u)\, \dd \mu   
\leq K2^{q}\left(\int_{B_{R}}H\left(x,\frac{u-u_{B_R}}{R-r}\right)\, \dd\mu
+ \int_{B_{R}}H(x,g_u)\, \dd \mu\right).
\]
This implies
\begin{align*}
\int_{B_{r}}H(x,g_u)\, \dd \mu   
&\leq \theta\left(\int_{B_{R}}H\left(x,\frac{u-u_{B_R}}{R-r}\right)\, \dd\mu
+ \int_{B_{R}}H(x,g_u)\, \dd \mu\right)\\
&\le(R-r)^{-p}\int_{B_{R}}|u-u_{B_R}|^p\, \dd \mu
+(R-r)^{-q}\int_{B_{R}}a|u-u_{B_R}|^q\, \dd \mu
+\theta\int_{B_{R}}H(x,g_u)\, \dd \mu,
\end{align*}
with $\theta= \frac{K 2^{q}}{1+K 2^{q}}<1$.
We apply a standard iteration lemma, see \cite[Lemma 6.1]{G}, to obtain
\[
\int_{B_{r}}H(x,g_u)\, \dd \mu
\leq C\int_{B_{R}}H\left(x,\frac{u-u_{B_R}}{R-r}\right)\, \dd\mu,
\]
where $C=C(q,K)$.
\end{proof}

Next we discuss a global energy estimate for quasiminimizers with boundary values.

\begin{lemma}\label{DeGiorgiLemma2}
Let $w\in N^{1,1}(\Omega)$ with $H(\cdot,g_w)\in L^1(\Omega)$.
Assume that $u\in N^{1,1}({\Omega})$  with $H(\cdot,g_u)\in L^1(\Omega)$ is a quasiminimizer on $\Omega$ with $u-w\in N^{1,1}_0(\Omega)$ and 
let $B_r\subset B_R$ be concentric balls. Then there exists a constant $C=C(K,q)$ such that
 \begin{equation*}
  \int_{B_{r}\cap\Omega}H(x, g_u)\,\dd\mu
  \leq C\left( \int_{B_R\cap\Omega}H\left(x, \frac{u-w}{R-r}\right)\,\dd\mu
  +\int_{B_R\cap\Omega}H(x, g_w)\,\dd\mu\right).
\end{equation*}
\end{lemma}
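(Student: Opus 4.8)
The plan is to mimic the proof of the local energy estimate (Lemma \ref{DeGiorgiLemma1}), but using the boundary-value function $w$ instead of the affine-type comparison $u_{B_R}$. Given concentric balls $B_r\subset B_R$, I would choose an $(R-r)^{-1}$-Lipschitz cutoff function $\eta$ with $0\le\eta\le1$, $\eta=1$ on $B_r$, and $\eta=0$ on $X\setminus B_R$, and then define the competitor $v=u-\eta(u-w)$. Since $u-w\in N^{1,1}_0(\Omega)$ and $\eta$ has compact support in $B_R$, one checks that $v\in N^{1,1}(\Omega')$ with $u-v=\eta(u-w)\in N^{1,1}_0(\Omega')$ for $\Omega'=B_R\cap\Omega$, so $v$ is admissible in Definition \ref{qmbv}. (One subtlety: $B_R$ need not be compactly contained in $\Omega$, which is precisely why we use Definition \ref{qmbv}, where the comparison is allowed over arbitrary open $\Omega'\subset\Omega$, rather than Definition \ref{lqm}.)

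Next I would estimate the upper gradient of $v$ via the Leibniz rule (\cite[Lemma 2.18]{BB}):
\[
g_v\le|u-w|g_\eta+(1-\eta)g_u+\eta g_w
\le\frac{|u-w|}{R-r}+(1-\chi_{B_r})g_u+\chi_{B_R}g_w,
\]
using $g_\eta\le(R-r)^{-1}\chi_{B_R}$, $0\le\eta\le1$, and $\eta=1$ on $B_r$ so that $1-\eta\le1-\chi_{B_r}$. Plugging this into the quasiminimizing inequality over $\Omega'=B_R\cap\Omega$ and using that $H(x,\cdot)$ is increasing together with the elementary bound $H(x,z_1+z_2+z_3)\le 3^{q-1}(H(x,z_1)+H(x,z_2)+H(x,z_3))$ (or simply iterating $H(x,z_1+z_2)\le 2^q(H(x,z_1)+H(x,z_2))$ as in the proof of Lemma \ref{DeGiorgiLemma1}), I obtain
\begin{align*}
\int_{B_r\cap\Omega}H(x,g_u)\,\dd\mu
&\le K\int_{B_R\cap\Omega}H(x,g_v)\,\dd\mu\\
&\le CK\left(\int_{B_R\cap\Omega}H\left(x,\frac{u-w}{R-r}\right)\,\dd\mu
+\int_{(B_R\setminus B_r)\cap\Omega}H(x,g_u)\,\dd\mu
+\int_{B_R\cap\Omega}H(x,g_w)\,\dd\mu\right).
\end{align*}

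Finally I would absorb the term $\int_{(B_R\setminus B_r)\cap\Omega}H(x,g_u)\,\dd\mu$ by the usual hole-filling trick: add $CK\int_{B_r\cap\Omega}H(x,g_u)\,\dd\mu$ to both sides to get the full integral over $B_R\cap\Omega$ on the right with coefficient $\theta=\frac{CK}{1+CK}<1$, then apply the iteration lemma \cite[Lemma 6.1]{G} to eliminate the $\theta\int_{B_R\cap\Omega}H(x,g_u)$ term and arrive at the claimed estimate with a constant depending only on $K$ and $q$. I do not anticipate a genuine obstacle here; the only point requiring care is the admissibility of $v$ near $\partial\Omega$ — one must verify that the truncation $\eta(u-w)$ stays in $N^{1,1}_0(B_R\cap\Omega)$, which follows since $u-w\in N^{1,1}_0(\Omega)$ is zero $\mu$-a.e.\ outside $\Omega$ and multiplication by the Lipschitz function $\eta$ preserves this, together with the fact that $\eta$ vanishes outside $B_R$. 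Everything else is a routine adaptation of the interior argument, with the extra $g_w$ contribution simply carried along.
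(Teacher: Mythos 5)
Your proposal is correct and follows essentially the same route as the paper: the same competitor $v=u-\eta(u-w)$ with an $(R-r)^{-1}$-Lipschitz cutoff, the same Leibniz-rule bound on $g_v$, and the same hole-filling plus iteration via \cite[Lemma 6.1]{G}. The admissibility remark about $\eta(u-w)\in N^{1,1}_0(B_R\cap\Omega)$ is exactly the point the paper also records before invoking Definition \ref{qmbv}.
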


\begin{proof}
Let $\eta$ be a $(R-r)^{-1}$-Lipschitz cutoff function such that $0\leq \eta \leq1$, $\eta=1$ on $B_r$ and $\eta=0$ in $X\setminus B_R$.
Let $v=u-\eta(u-w)$.
Then $\eta(u-w)\in N^{1,1}_0(B_R\cap\Omega)$ and thus $v-u\in N^{1,1}_0(B_R\cap\Omega)$.
By Definition \ref{qmbv}, we obtain
$$
\int_{B_R\cap\Omega}H(x,g_u)\,\dd\mu\leq K\int_{B_R\cap\Omega}H(x,g_v)\,\dd\mu,
$$
where $v=u+\eta(w-u)$.
Since 
\[
g_v\leq\vert u-w\vert g_\eta+(1-\eta)g_u+\eta g_w
\le\dfrac{\vert u-w\vert}{R-r}+(1-\chi_{B_{r}})g_{u}+g_w,
\] 
we obtain
\begin{align*}
\int_{B_{r}\cap\Omega}H(x,g_u)\, \dd \mu  
&\leq\int_{B_{R}\cap\Omega}H(x,g_u)\, \dd\mu 
\leq K\int_{B_{R}\cap\Omega}H(x,g_v)\, \dd\mu \\
& \leq 3^qK\left(\int_{B_{R}\cap\Omega}H\left(x,\frac{u-w}{R-r}\right)\,\dd\mu
+\int_{(B_{R}\setminus B_{r})\cap\Omega}H(x,g_u)\, \dd \mu
+\int_{B_{R}\cap\Omega}H(x,g_w)\, \dd \mu\right).
\end{align*}
By filling the hole and iterating as in the proof of Lemma \ref{DeGiorgiLemma1}, we arrive at
\[
  \int_{B_{r}\cap\Omega}H(x, g_u)\,\dd\mu
  \leq C\left( \int_{B_R\cap\Omega}H\left(x, \frac{u-w}{R-r}\right)\,\dd\mu
  +\int_{B_R\cap\Omega}H(x, g_w)\,\dd\mu\right),
\]
where $C=C(q,K)$.
\end{proof}

\section{Double phase Sobolev--Poincar\'e inequalities}\label{Double Phase SP ineq}

This section discusses double phase Sobolev--Poincar\'e inequalities. 
We consider interior and boundary estimates separetely.
We begin with interior estimates.

\begin{lemma}\label{Lemma2}
Assume that $u\in N^{1,1}_{\loc}({\Omega})$ with $H(\cdot,g_u)\in L^1_{\loc}(\Omega)$. Let $a_0=\inf_{x\in B_{2\lambda r}}a(x)$. 
Then there exist a constant $C= C(\mathrm{data})$ and exponents $0<d_2<1\leq d_1<\infty$, with $d_1= d_1(C_D,p,q)$ and $d_2= d_2(C_D,p,q)$, such that
\begin{equation}
\left(\dashint_{B_r}\left( \left|\dfrac{u-u_{B_r}}{r}\right|^p+a_0\left|\dfrac{u-u_{B_r}}{r}\right|^q \right)^{d_1}\,\dd\mu\right)^{\frac{1}{d_1}} 
\leq C \left(\dashint_{B_{2\lambda r}} \left( g_u^{p}+ a_0 g_u^q\right)^{d_2}\,\dd\mu\right)^{\frac{1}{d_2}},	
\end{equation} whenever $B_{2\lambda r} \Subset\Omega$.
\end{lemma}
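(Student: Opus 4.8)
The plan is to exploit the inequality $\frac qp\le 1+\frac\alpha Q$ to find a single exponent $s\in(1,p)$ with $q<s^*$ for which $X$ supports a $(s^*,s)$-Poincaré inequality, as recorded in \eqref{spoincare1}, and then to interpolate between the $p$-part and the $q$-part of the double phase functional using that one inequality. Concretely, I would set $d_2=s/q<1$ and choose $d_1$ so that both $pd_1\le s^*$ and $qd_1\le s^*$, e.g.\ $d_1=s^*/q\ge 1$; these are the claimed exponents depending only on $C_D,p,q$. The left-hand side is then bounded by estimating $\big(\dashint_{B_r}|(u-u_{B_r})/r|^{pd_1}\big)^{1/d_1}$ and $\big(\dashint_{B_r}a_0|(u-u_{B_r})/r|^{qd_1}\big)^{1/d_1}$ separately: for each, apply the $(s^*,s)$-Poincaré inequality \eqref{spoincare1} (after noting $pd_1\le s^*$ and $qd_1\le s^*$, so the $L^{s^*}$-norm controls the relevant $L^{pd_1}$ or $L^{qd_1}$ average up to the measure of the ball, using Hölder's inequality on a space of total mass $1$ under $\dashint$).

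**Key steps in order.** First, fix $s$ and $s^*$ from \eqref{spoincare1} and verify $1<s<p<q<s^*$; record $d_1:=s^*/q$ and $d_2:=s/q$, and check $0<d_2<1\le d_1<\infty$ with the stated dependencies. Second, for the $p$-term: since $pd_1=ps^*/q\le s^*$ (as $p<q$), Hölder gives $\big(\dashint_{B_r}|u-u_{B_r}|^{pd_1}\big)^{1/(pd_1)}\le\big(\dashint_{B_r}|u-u_{B_r}|^{s^*}\big)^{1/s^*}\le Cr\big(\dashint_{B_{2\lambda r}}g_u^s\big)^{1/s}$, and then raising to the power $p$ and using $g_u^s\le (g_u^p+a_0g_u^q)^{s/p}$ together with $s/p\le 1$... actually more carefully: bound $\big(\dashint g_u^s\big)^{p/s}\le \big(\dashint g_u^{s}\big)^{p/s}$ and compare with $\big(\dashint (g_u^p+a_0g_u^q)^{d_2}\big)^{1/d_2}$ by noting $g_u^p=(g_u^p)\le (g_u^p+a_0g_u^q)$ and applying Hölder/Jensen since $s/p<1=p\cdot d_2/?$; I would organize this so that $\big(\dashint g_u^s\big)^{1/s}\le\big(\dashint(g_u^p+a_0g_u^q)^{d_2}\big)^{1/(pd_2)}$ via $g_u^s\le(g_u^p+a_0g_u^q)^{s/p}=\big((g_u^p+a_0g_u^q)^{d_2}\big)^{s/(pd_2)}$ and Jensen with exponent $\le 1$. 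Third, the $q$-term is the delicate one: here one writes $a_0|u-u_{B_r}|^q\le\big(a_0^{1/q}|u-u_{B_r}|\big)^q$ and wants to pull $a_0^{1/q}$ through the Poincaré inequality; since $a_0$ is a constant on the ball this is immediate, giving $\big(\dashint_{B_r}a_0|u-u_{B_r}|^{qd_1}\big)^{1/(qd_1)}=a_0^{1/q}\big(\dashint|u-u_{B_r}|^{qd_1}\big)^{1/(qd_1)}\le a_0^{1/q}Cr\big(\dashint_{B_{2\lambda r}}g_u^s\big)^{1/s}$, then $a_0^{1/q}g_u^s=\big(a_0^{p/q}g_u^p\big)^{s/p}\le\big(a_0 g_u^q+g_u^p\big)^{s/p}$ provided $a_0^{p/q}g_u^p\le a_0 g_u^q+g_u^p$, which follows from Young's inequality (splitting according to whether $g_u\lessgtr 1$, or directly: $a_0^{p/q}t^p\le \frac pq a_0 t^q+\frac{q-p}{q}t^p$). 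Fourth, combine the two pieces: $\text{LHS}\le C\big(\dashint_{B_r}|(u-u_{B_r})/r|^{pd_1}\big)^{1/d_1}+C\big(\dashint_{B_r}a_0|(u-u_{B_r})/r|^{qd_1}\big)^{1/d_1}$ (using $(x+y)^{d_1}\le 2^{d_1-1}(x^{d_1}+y^{d_1})$ then taking $1/d_1$ powers, or the reverse for $d_1\ge 1$ — one direction needs $(a+b)^{1/d_1}\le a^{1/d_1}+b^{1/d_1}$ which holds since $1/d_1\le 1$), and bound each by the right-hand side via the above.

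**Main obstacle.** The technical heart is the bookkeeping for the $q$-term: making the intrinsic coefficient $a_0$ travel correctly from the $L^{qd_1}$ norm on $B_r$ through the $(s^*,s)$-Poincaré inequality to the $L^{s}$ norm of $g_u$ on $B_{2\lambda r}$, and then recombining $a_0^{p/q}g_u^p$ into the double phase density $g_u^p+a_0g_u^q$ via Young's inequality — all while keeping every exponent $\ge 1$ or $\le 1$ in the right place so that Jensen and Minkowski apply in the correct direction. One must also be slightly careful that $a_0$ is the infimum over the \emph{larger} ball $B_{2\lambda r}$, so it is a legitimate constant to factor out on $B_r\subset B_{2\lambda r}$; this is what makes the estimate \emph{intrinsic} in the sense advertised in the introduction. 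Everything else is a routine application of \eqref{spoincare1}, Hölder's inequality on a probability space, and elementary convexity inequalities, with the final constant depending only on $C_D,C_{PI},\lambda,p,q$, hence on $\mathrm{data}$.
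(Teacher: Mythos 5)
Your overall strategy is the paper's: use the single $(s^*,s)$-Poincar\'e inequality \eqref{spoincare1} with $1<s<p<q<s^*$, split the double phase density into its $p$- and $q$-parts, and move between exponents with H\"older/Jensen, exploiting that $a_0$ is a constant. However, two concrete steps do not work as written. First, your choice $d_2=s/q$ is incompatible with the Jensen step you invoke for the $p$-term: you need $s/(pd_2)\le1$ there, but $s/(p\cdot s/q)=q/p>1$, so Jensen runs in the wrong direction. The exponent must satisfy $s/p<d_2<1$ (such $d_2$ exists since $s<p$); this is exactly what the paper takes, together with $1\le d_1<s^*/q$.

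Second, and more seriously, your treatment of the $q$-term via Young's inequality is a dead end. After Poincar\'e you hold $\bigl[a_0^{1/q}\bigl(\dashint_{B_{2\lambda r}}g_u^s\,\dd\mu\bigr)^{1/s}\bigr]^q$, and the pointwise bound $a_0^{s/q}g_u^s\le(g_u^p+a_0g_u^q)^{s/p}$ only yields $\bigl(\dashint_{B_{2\lambda r}}\Phi^{s/p}\,\dd\mu\bigr)^{q/s}$ with $\Phi=g_u^p+a_0g_u^q$; this equals $\Vert\Phi\Vert_{L^{s/p}\text{-avg}}^{q/p}$, which has homogeneity $q/p\ne1$ in $\Phi$ and cannot be controlled by $\bigl(\dashint\Phi^{d_2}\,\dd\mu\bigr)^{1/d_2}$ for any $d_2$. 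No Young's inequality is needed: since $s<pd_2<qd_2$ and $qd_1<s^*$, H\"older on both sides of \eqref{spoincare1} gives the $(qd_1,qd_2)$-Poincar\'e inequality, and then
\begin{equation*}
a_0\left(\dashint_{B_r}\left|\frac{u-u_{B_r}}{r}\right|^{qd_1}\dd\mu\right)^{\frac{1}{d_1}}
\le Ca_0\left(\dashint_{B_{2\lambda r}}g_u^{qd_2}\,\dd\mu\right)^{\frac{1}{d_2}}
= C\left(\dashint_{B_{2\lambda r}}(a_0g_u^q)^{d_2}\,\dd\mu\right)^{\frac{1}{d_2}}
\le C\left(\dashint_{B_{2\lambda r}}\Phi^{d_2}\,\dd\mu\right)^{\frac{1}{d_2}},
\end{equation*}
because the constant $a_0$ passes through the $L^{qd_2}$-average with exactly the right power. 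This is the paper's argument; combined with the analogous $p$-term bound and the subadditivity $A^{1/d_2}+B^{1/d_2}\le 2(A+B)^{1/d_2}$ together with $x^{d_2}+y^{d_2}\le 2(x+y)^{d_2}$ for $0<d_2<1$, it closes the proof. With these two corrections your argument becomes essentially the paper's.
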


\begin{proof}
By \eqref{spoincare1} there exists $s$, with $1<s<p<q<s^*$, such that 
\begin{equation}\label{spoincare}
\left(\dashint_{B_r} \left|\frac{u-u_{B_r}}{r}\right|^{s^*}\,\dd\mu\right)^{\frac{1}{s^*}} 
\leq C\left(\dashint_{B_{2\lambda r}} g_u^{s}\,\dd\mu\right)^{\frac{1}{s}}.
\end{equation}
Let $\frac{s}{p}<d_2<1$ and $1\le d_1<\frac{s^*}{q}$.
Since $pd_1<qd_1<s^*$ and $s<pd_2<qd_2$, by H\"older's inequality, we have
\begin{equation}\label{poincared1}
\left(	\dashint_{B_r} \left|\dfrac{u-u_{B_r}}{r}\right|^{p d_1}\, \dd\mu\right)^{\frac{1}{pd_1}} 
\leq C \left(\dashint_{B_{2\lambda r}} g_u^{p d_2}\, \dd\mu\right)^{\frac{1}{pd_2}},
\end{equation}
and 
\begin{equation*}
\left(	\dashint_{B_r} \left|\dfrac{u-u_{B_r}}{r}\right|^{q d_1}\,\dd\mu\right)^{\frac{1}{qd_1}}
\leq C \left(\dashint_{B_{2\lambda r}} g_u^{q d_2}\,\dd\mu\right)^{\frac{1}{qd_2}}.
\end{equation*}
It follows that
\begin{align*}
&\left(\dashint_{B_r}\left(\left\vert\frac{u-u_{B_r}}{r}\right\vert^p+a_0\left\vert\frac{u-u_{B_r}}{r}\right\vert^q\right)^{d_1}\,\dd\mu\right)^{\frac{1}{d_1}}\\
&\qquad\leq \left(\dashint_{B_r}\left|\dfrac{u-u_{B_r}}{r}\right|^{p d_1}\,\dd\mu\right)^{\frac{1}{d_1}}
+a_0\left(\dashint_{B_r}  \left|\dfrac{u-u_{B_r}}{r}\right|^{q d_1}\,\dd\mu  \right)^{\frac{1}{d_1}}\\
&\qquad\leq C \left( \left(\dashint_{B_{2\lambda r}}  g_u^{p d_2}\,\dd\mu\right)^{\frac{1}{d_2}}+ a_0  \left(\dashint_{B_{2\lambda r}} g_u^{q d_2}\,\dd\mu\right)^{\frac{1}{d_2}} \right)\\
&\qquad\leq C \left(\dashint_{B_{2\lambda r}}  g_u^{p d_2}\,\dd\mu+\dashint_{B_{2\lambda r}} \left(a_0g_u^q\right)^{d_2}\,\dd\mu\right)^{\frac{1}{d_2}}\\ 
&\qquad\leq C\left(\dashint_{B_{2\lambda r}}  \left(g_u^{p}+ a_0g_u^{q}\right)^{d_2}\,\dd\mu\right)^{\frac{1}{d_2}},
\end{align*}
where $C= C(C_D,C_{PI},\lambda,p,q)$.
Observe that all integrals are finite, since
\begin{align*}
\left(\dashint_{B_{2\lambda r}}\left(g_u^{p}+ a_0g_u^{q}\right)^{d_2}\,\dd\mu\right)^{\frac{1}{d_2}}
&\le\left(\dashint_{B_{2\lambda r}}\left(g_u^{p}+ a(x)g_u^{q}\right)^{d_2}\,\dd\mu\right)^{\frac{1}{d_2}}\\
&\le\dashint_{B_{2\lambda r}}\left(g_u^{p}+ a(x)g_u^{q}\right)\,\dd\mu\\
&=\dashint_{B_{2\lambda r}}H(x,g_u)\,\dd\mu<\infty.
\end{align*}
\end{proof}

Then we consider an interior double phase Sobolev--Poincar\'e inequality.

\begin{lemma}\label{Lemma3}
Assume that $u\in N^{1,1}_{\loc}({\Omega})$ with $H(\cdot,g_u)\in L^1_{\loc}(\Omega)$.
Then there exists a constant $C=C(\mathrm{data})$ and exponents $0<d_2<1\leq d_1<\infty$,  with $d_1= d_1(\mathrm{data})$ and $d_2= d_2(\mathrm{data})$, such that 
\begin{equation*}
\left(\dashint_{B_r}H\left(x,\frac{u-u_{B_r}}{r}\right)^{d_1}\,\dd\mu\right)^{\frac{1}{d_1}}
\leq C\left(1+\Vert g_u\Vert^{q-p}_{L^p(B_{2\lambda r})}\mu(B_{2\lambda r})^{\frac{\alpha}{Q}-\frac{q-p}{p}}\right)
\left(\dashint_{B_{2\lambda r}} H(x,g_u)^{d_2}\,\dd\mu\right)^{\frac{1}{d_2}},
\end{equation*}
whenever $B_{2\lambda r}\Subset\Omega$. 
\end{lemma}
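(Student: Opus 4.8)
The plan is to reduce the double phase Sobolev--Poincaré inequality on $B_r$ to the two single-phase estimates from Lemma~\ref{Lemma2}, and then to control the difference between the true coefficient $a(x)$ appearing on the left and the constant $a_0=\inf_{B_{2\lambda r}}a$ appearing in Lemma~\ref{Lemma2}. First I would write, for $x\in B_r$,
\[
H\left(x,\tfrac{u-u_{B_r}}{r}\right)
=\left|\tfrac{u-u_{B_r}}{r}\right|^p+a(x)\left|\tfrac{u-u_{B_r}}{r}\right|^q
\le\left|\tfrac{u-u_{B_r}}{r}\right|^p+a_0\left|\tfrac{u-u_{B_r}}{r}\right|^q
+\bigl(a(x)-a_0\bigr)\left|\tfrac{u-u_{B_r}}{r}\right|^q,
\]
so after raising to the power $d_1$ and averaging, the first two terms are exactly handled by Lemma~\ref{Lemma2}, giving the clean bound $C\bigl(\dashint_{B_{2\lambda r}}(g_u^p+a_0g_u^q)^{d_2}\bigr)^{1/d_2}\le C\bigl(\dashint_{B_{2\lambda r}}H(x,g_u)^{d_2}\bigr)^{1/d_2}$. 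The work is entirely in the error term $\bigl(a(x)-a_0\bigr)\left|\tfrac{u-u_{B_r}}{r}\right|^q$.

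For that term I would use the Hölder continuity \eqref{aalpha}: for $x\in B_{2\lambda r}$, picking $y\in B_{2\lambda r}$ where $a$ is (nearly) minimal, $|a(x)-a_0|\le[a]_\alpha\,\delta_\mu(x,y)^\alpha$, and since $d(x,y)\le 4\lambda r$ one has $\delta_\mu(x,y)^\alpha\le C\,\mu(B_{C r})^{\alpha/Q}\le C\,\mu(B_{2\lambda r})^{\alpha/Q}$ by doubling and Lemma~\ref{lemm3.3}. Thus
\[
\left(\dashint_{B_r}\Bigl((a(x)-a_0)\left|\tfrac{u-u_{B_r}}{r}\right|^q\Bigr)^{d_1}\dd\mu\right)^{\frac1{d_1}}
\le C[a]_\alpha\,\mu(B_{2\lambda r})^{\frac\alpha Q}\left(\dashint_{B_r}\left|\tfrac{u-u_{B_r}}{r}\right|^{qd_1}\dd\mu\right)^{\frac1{d_1}}.
\]
Now apply the single-phase Sobolev--Poincaré bound \eqref{poincared1} type estimate (the second display in the proof of Lemma~\ref{Lemma2}, with $q$ in place of $p$) to replace $\left(\dashint_{B_r}\left|\tfrac{u-u_{B_r}}{r}\right|^{qd_1}\right)^{1/(qd_1)}$ by $C\left(\dashint_{B_{2\lambda r}}g_u^{qd_2}\right)^{1/(qd_2)}$, i.e.
\[
\left(\dashint_{B_r}\left|\tfrac{u-u_{B_r}}{r}\right|^{qd_1}\dd\mu\right)^{\frac1{d_1}}
\le C\left(\dashint_{B_{2\lambda r}}g_u^{qd_2}\dd\mu\right)^{\frac1{d_2}}.
\]
So the error term is bounded by $C[a]_\alpha\,\mu(B_{2\lambda r})^{\alpha/Q}\bigl(\dashint_{B_{2\lambda r}}g_u^{qd_2}\bigr)^{1/d_2}$, and it remains to extract from this the factor $1+\|g_u\|_{L^p(B_{2\lambda r})}^{q-p}\mu(B_{2\lambda r})^{\alpha/Q-(q-p)/p}$ times $\bigl(\dashint_{B_{2\lambda r}}H(x,g_u)^{d_2}\bigr)^{1/d_2}$.

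The main obstacle, and the step requiring care, is precisely this last algebraic manipulation: splitting $g_u^{qd_2}$ as $g_u^{(q-p)d_2}\cdot g_u^{pd_2}$ and applying Hölder in the average over $B_{2\lambda r}$ with exponents chosen so that $g_u^{pd_2}$ pairs into an $H(x,g_u)^{d_2}$-integral and $g_u^{(q-p)d_2}$ pairs into a power of $\|g_u\|_{L^p(B_{2\lambda r})}$. Concretely one writes
\[
\dashint_{B_{2\lambda r}}g_u^{qd_2}\dd\mu
\le\left(\dashint_{B_{2\lambda r}}g_u^{p}\dd\mu\right)^{\frac{(q-p)d_2}{p}}\left(\dashint_{B_{2\lambda r}}g_u^{\beta}\dd\mu\right)^{1-\frac{(q-p)d_2}{p}},
\]
with $\beta$ the conjugate choice making the second factor dominated by $\bigl(\dashint_{B_{2\lambda r}}H(x,g_u)^{d_2}\bigr)$ after possibly shrinking $d_2$; converting the average $\dashint_{B_{2\lambda r}}g_u^p$ into $\mu(B_{2\lambda r})^{-1}\|g_u\|_{L^p(B_{2\lambda r})}^p$ produces exactly the stated $\mu(B_{2\lambda r})$-power, since $\alpha/Q-(q-p)/p\le 0$ by \eqref{pqcond} so the measure factor is harmless and there is no loss in also adding the trivial $1$ to absorb the pure $p$-phase contribution. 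Finally one has to verify that the admissible ranges $\tfrac sp<d_2<1\le d_1<\tfrac{s^*}q$ from Lemma~\ref{Lemma2} still leave room for these Hölder exponents; this is where the self-improvement $s<p<q<s^*$ guaranteed by \eqref{spoincare1} is used, and it fixes $d_1,d_2$ as functions of $\mathrm{data}$.
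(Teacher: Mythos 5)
Your decomposition $a(x)=a_0+(a(x)-a_0)$, with Lemma \ref{Lemma2} handling the $a_0$-part and the H\"older continuity \eqref{aalpha} giving $0\le a(x)-a_0\le C[a]_\alpha\mu(B_{2\lambda r})^{\alpha/Q}$ on $B_{2\lambda r}$, is a legitimate alternative to the paper's argument, which instead splits into two cases according to whether $a_0$ is larger or smaller than a multiple of $[a]_\alpha\mu(B_{2\lambda r})^{\alpha/Q}$ (in the first case $a_0\le a\le 2a_0$ and Lemma \ref{Lemma2} applies verbatim; in the second case $a$ is small on all of $B_r$). Everything up to the bound of the error term by $C[a]_\alpha\,\mu(B_{2\lambda r})^{\alpha/Q}\bigl(\dashint_{B_r}|(u-u_{B_r})/r|^{qd_1}\,\dd\mu\bigr)^{1/d_1}$ is sound.

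The gap is in the step you yourself flag as the main obstacle, and it is not repairable along the route you propose. Once you apply the Sobolev--Poincar\'e inequality to replace that quantity by $\bigl(\dashint_{B_{2\lambda r}}g_u^{qd_2}\,\dd\mu\bigr)^{1/d_2}$, you would need the pure gradient-side inequality
\[
\Bigl(\dashint_{B_{2\lambda r}}g_u^{qd_2}\,\dd\mu\Bigr)^{\frac{1}{d_2}}
\le C\Bigl(\dashint_{B_{2\lambda r}}g_u^{p}\,\dd\mu\Bigr)^{\frac{q-p}{p}}\Bigl(\dashint_{B_{2\lambda r}}H(x,g_u)^{d_2}\,\dd\mu\Bigr)^{\frac{1}{d_2}},
\]
and this is false for general nonnegative functions: taking $g_u=M\chi_E$ with $\mu(E)=\epsilon\,\mu(B_{2\lambda r})$ and $a\equiv0$, the left side is $M^{q}\epsilon^{1/d_2}$ while the right side is $CM^{q}\epsilon^{(q-p)/p+1/d_2}$, and the ratio blows up as $\epsilon\to0$. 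Your H\"older split exhibits the same defect concretely: the conjugate factor is $\bigl(\dashint g_u^{pd_2t'}\,\dd\mu\bigr)^{1/t'}$ with $t'>1$, which by Jensen dominates $\dashint g_u^{pd_2}\,\dd\mu$ rather than being dominated by it, and shrinking $d_2$ does not change this. The fix is to reverse the order of operations: interpolate on the function side \emph{before} applying Poincar\'e, writing $\bigl(\dashint_{B_r}|(u-u_{B_r})/r|^{qd_1}\,\dd\mu\bigr)^{1/d_1}$ as the product of its $\frac{p}{qd_1}$-power and its $\frac{q-p}{qd_1}$-power, and then apply the $(s^*,s)$-Poincar\'e inequality \eqref{spoincare} twice with different right-hand exponents: with exponent $pd_2>s$ for the first factor, yielding $\bigl(\dashint_{B_{2\lambda r}}g_u^{pd_2}\,\dd\mu\bigr)^{1/d_2}\le\bigl(\dashint_{B_{2\lambda r}}H(x,g_u)^{d_2}\,\dd\mu\bigr)^{1/d_2}$, and with the full exponent $p>s$ for the second, yielding $\bigl(\dashint_{B_{2\lambda r}}g_u^{p}\,\dd\mu\bigr)^{(q-p)/p}=\Vert g_u\Vert_{L^p(B_{2\lambda r})}^{q-p}\mu(B_{2\lambda r})^{-(q-p)/p}$. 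This is precisely the paper's device in \eqref{6lemma3} and the displays following it, and it slots directly into your decomposition of $a$. A minor correction: \eqref{pqcond} gives $\frac{\alpha}{Q}-\frac{q-p}{p}\ge0$, not $\le0$ as you assert.
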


\begin{proof}
First assume that
\begin{equation}\label{1C}
a_0=\inf_{x\in B_{2\lambda r}}a(x)>2[a]_{\alpha}(2C_D^2\mu(B_{2\lambda r}))^{\alpha/Q},
\end{equation}
where $Q=\log_2C_D$ is as in Lemma \ref{lemm3.3}.
Note that, for every $x,y\in B_{2\lambda r}$, we have
\begin{align*}
\delta_{\mu}(x,y)
&=\left(\mu(B_{d(x,y)}(x))+\mu(B_{d(x,y)}(y))\right)^{1/Q}\\
&\leq \left(\mu(B_{4\lambda r}(x))+\mu(B_{4\lambda r}(y))\right)^{1/Q}\\
&\leq \left(2\mu (B_{6\lambda r})\right)^{1/Q}
\leq (2C_D^2\mu(B_{2\lambda r}))^{1/Q}.
\end{align*}
By \eqref{1C} we obtain
\begin{align*}
2a_0&= 2a(x)-2\left(a(x)-a_0\right)\geq a(x)+a_0-2\left(a(x)-a_0\right)\\ 
&\geq a(x)+2[a]_{\alpha}(2C_D^2\mu(B_{2\lambda r}))^{\alpha/Q}-2\left(a(x)-a_0\right)\\
&\geq a(x)+2\sup_{\substack{x,y \in B_{2\lambda r}\\ x\neq y}} \frac{|a(x)-a(y)|}{\delta_{\mu}(x,y)^{\alpha}}(2C_D^2\mu(B_{2\lambda r}))^{\alpha/Q}-2\left(a(x)-a_0\right)\\
&\geq a(x)+2\sup_{x,y \in B_{2\lambda r}}|a(x)-a(y)|-2\left(a(x)-a_0\right)\\
&\geq a(x)+2\sup_{x,y \in B_{2\lambda r}}(a(x)-a(y))-2\left(a(x)-a_0\right)\\
&\geq a(x)+2a(x)-2\inf_{y \in B_{2\lambda r}}a(y)-2\left(a(x)-a_0\right)
=a(x),
\end{align*}
for every $x \in B_{2\lambda r}$.
On the other hand, we have $a(x)\geq\inf_{x\in B_{2\lambda r}} a(x)=a_0$ for every $x\in B_{2\lambda r}$.
This implies that $a_0\le a(x)\le 2a_0$ for every $x \in B_{2\lambda r}$.
By Lemma \ref{Lemma2}, we conclude that
\begin{equation}\label{2C}
\begin{split}
\left(\dashint_{B_r} H\left(x,\frac{u-u_{B_r}}{r}\right)^{d_1}\,\dd\mu\right)^{\frac{1}{d_1}}
&= \left(\dashint_{B_r}\left(\left|\dfrac{u-u_{B_r}}{r}\right|^p+ a(x)  \left|\dfrac{u-u_{B_r}}{r}\right|^q \right)^{d_1} \,\dd\mu\right)^{\frac{1}{d_1}}\\
&\leq C\left(\dashint_{B_r}\left(\left|\dfrac{u-u_{B_r}}{r}\right|^p+ a_0  \left|\dfrac{u-u_{B_r}}{r}\right|^q \right)^{d_1} \,\dd\mu \right)^{\frac{1}{d_1}}\\
&\leq C \left(\dashint_{B_{2\lambda r}} \left( g_u^{p}+ a_0 g_u^q\right)^{d_2}\,\dd\mu\right)^{\frac{1}{d_2}}\\
&\leq C \left(\dashint_{B_{2\lambda r}} \left( g_u^{p}+ a(x) g_u^q\right)^{d_2}\,\dd\mu\right)^{\frac{1}{d_2}}\\
&\leq C \left(\dashint_{B_{2\lambda r}} H(x,g_u)^{d_2} \,\dd\mu\right)^{\frac{1}{d_2}},
\end{split}
\end{equation}
where $C= C(\mathrm{data})$, $d_1= d_1(\mathrm{data})$ and $d_2= d_2(\mathrm{data})$ with $0<d_2<1\leq d_1<\infty$.

Next we consider the case which is complementary to \eqref{1C}, that is, 
\begin{equation}\label{3C}
a_0=\inf_{x\in B_{2\lambda r}}a(x)\leq 2[a]_{\alpha}(2C_D^2\mu(B_{2\lambda r}))^{\alpha/Q}.
\end{equation}
Notice that, for every $x\in  B_{2\lambda r}$ and $y \in B_r$, with $y\neq x$, we have
\begin{align}\label{notice}
    a(y)-a(x)&\leq \vert a(x)-a(y)\vert=\frac{\vert a(x)-a(y)\vert}{\delta_{\mu}(x,y)^{\alpha}}\delta_{\mu}(x,y)^{\alpha}\leq  [a]_{\alpha}\delta_{\mu}(x,y)^{\alpha}.
\end{align}
Note that, for every $x\in  B_{2\lambda r}$ and $y \in B_r$, with $y\neq x$, we have
\begin{align*}
\delta_{\mu}(x,y)
&=\left(\mu(B_{d(x,y)}(x))+\mu(B_{d(x,y)}(y))\right)^{1/Q}\\
&\leq \left(\mu(B_{3\lambda r}(x))+\mu(B_{3\lambda r}(y))\right)^{1/Q}\\
&\leq \left(2\mu (B_{5\lambda r})\right)^{1/Q}
\leq C\mu(B_{2\lambda r})^{1/Q},
\end{align*}
where $C=C(C_D)$.
By \eqref{notice}, we get
\[
a(y)\leq a(x)+C[a]_{\alpha}\mu(B_{2\lambda r})^{\alpha/Q},
\]
where $C=C(C_D,\alpha)$.
By taking infimum over all $x\in 2\lambda B_r$, we obtain
\begin{align*}
 a(y)
 &\leq \inf_{x\in B_{2\lambda r}}a(x)+C[a]_{\alpha}\mu(B_{2\lambda r})^{\alpha/Q}\\
 &\leq 2[a]_{\alpha}(2C_D^2\mu(B_{2\lambda r}))^{\alpha/Q}+C[a]_{\alpha}\mu(B_{2\lambda r})^{\alpha/Q}\\\
 &= C[a]_{\alpha}\mu(B_{2\lambda r})^{\alpha/Q},
\end{align*}
where $C=C(C_D,\alpha)$.
By taking supremum over $y\in B_r$, we conclude that
\[
\sup_{y\in B_r}a(y)
\leq C[a]_{\alpha}\mu(B_{2\lambda r})^{\alpha/Q}.
\]
It follows that
\begin{equation}\label{6lemma3}
\begin{split}
	&\left(\dashint_{B_r} H\left(x,\frac{u-u_{B_r}}{r}\right)^{d_1}\,\dd\mu\right)^{\frac{1}{d_1}}
	= \left(\dashint_{B_r}\left(\left|\dfrac{u-u_{B_r}}{r}\right|^p+ a(x)  \left|\dfrac{u-u_{B_r}}{r}\right|^q \right)^{d_1} \,\dd\mu\right)^{\frac{1}{d_1}} \\
	&\qquad\leq \left(\dashint_{B_r}\left(\left|\dfrac{u-u_{B_r}}{r}\right|^p
	+  C[a]_{\alpha}\mu(B_{2\lambda r})^{\alpha/Q}\left|\dfrac{u-u_{B_r}}{r}\right|^q \right)^{d_1} \,\dd\mu\right)^{\frac{1}{d_1}}  \\
	&\qquad\leq \left(\dashint_{B_r}\left\vert\frac{u-u_{B_r}}{r}\right\vert^{pd_1}\,\dd\mu\right)^{\frac{1}{d_1}}
	+C[a]_{\alpha}\mu(B_{2\lambda r})^{\alpha/Q}\left(\dashint_{B_r}\left\vert\frac{u-u_{B_r}}{r}\right\vert^{q\, d_1}\,\dd\mu\right)^{\frac{1}{d_1}}\\
	 &\qquad\leq C\left(\dashint_{B_r}\left\vert\frac{u-u_{B_r}}{r}\right\vert^{qd_1}\,\dd\mu\right)^{\frac{1}{d_1}\frac{p}{q}} 
    \left(1+[a]_{\alpha}\mu(B_{2\lambda r})^{\alpha/Q}\left(\dashint_{B_r}\left\vert\frac{u-u_{B_r}}{r}\right\vert^{qd_1}\,\dd\mu\right)^{\frac{q-p}{qd_1}}\right),
\end{split}
\end{equation}
where $C=C(C_D,\alpha)$.
Since $qd_1<s^*$ and $s<p$,  \eqref{spoincare} and H\"older's inequality imply
\[
\left(\dashint_{B_r}\left\vert \frac{u-u_{B_r}}{r}\right\vert^{qd_1}\,\dd\mu\right)^{\frac{1}{qd_1}}
\leq C\left(\dashint_{B_{2\lambda r}}g_u^{p}\,\dd\mu\right)^{\frac{1}{p}},
\]
where $C=C(\mathrm{data})$.
Thus we have
\[
\left(\dashint_{B_r}\left\vert\frac{ u-u_{B_r}}{r}\right\vert^{qd_1}\,\dd\mu\right)^{\frac{q-p}{qd_1}}
\leq C\left(\dashint_{B_{2\lambda r}}g_u^{p}\,\dd\mu\right)^{\frac{q-p}{p}}
=C\Vert g_u\Vert^{q-p}_{L^p(B_{2\lambda r})}\mu(B_{2\lambda r})^{-\frac{q-p}{p}},
\]
where $C= C(\mathrm{data})$.
By \eqref{6lemma3} we obtain
\[
\left(\dashint_{B_r} H\left(x,\frac{u-u_{B_r}}{r}\right)^{d_1}\,\dd\mu\right)^{\frac{1}{d_1}}
\leq C\left(\dashint_{B_r}\left\vert\frac{u-u_{B_r}}{r}\right\vert^{qd_1}\,\dd\mu\right)^{\frac{1}{d_1}\frac{p}{q}}
 \left(1+\Vert g_u\Vert^{q-p}_{L^p(B_{2\lambda r})}\mu(B_{2\lambda r})^{\frac{\alpha}{Q}-\frac{q-p}{p}}\right),
\]
where $C= C(\mathrm{data})$. 
Since $qd_1<s^*$ and $s<pd_2$, by \eqref{spoincare} we have
\begin{align*}
\left(\dashint_{B_r}\left\vert\frac{u-u_{B_r}}{r}\right\vert^{qd_1}\,\dd\mu\right)^{\frac{1}{d_1}\frac{p}{q}}
&\leq C\left(\dashint_{B_{2\lambda r}}g_u^{p\, d_2}\,\dd \mu\right)^{\frac{1}{d_2}}\\
&\leq C \left(\dashint_{B_{2\lambda r}} \left(g_u^p+a(x)g_u^q\right)^{d_2}\,\dd\mu\right)^{\frac{1}{d_2}}\\
& = C\left(\dashint_{B_{2\lambda r}} H(x,g_u)^{d_2}\,\dd\mu\right)^{\frac{1}{d_2}}.
\end{align*}
This completes the proof.
\end{proof}

Next we discuss a Sobolev inequality for functions which vanish on a large set, see \cite{KS}.	

\begin{lemma}\label{lemma2density} 
Assume that $u\in N^{1,1}_{\loc}({\Omega})$ with $H(\cdot,g_u)\in L^1_{\loc}(\Omega)$. 
Let $B_r$ be a ball and $a_0=\inf_{x\in B_{2\lambda r}}a(x)$. Assume that there exists $\gamma$, $0<\gamma<1$, such that
\[
\mu(\lbrace x\in B_r:\vert u(x)\vert>0\rbrace)\leq\gamma\mu(B_r).
\]
Then there exist a constant $C= C(\mathrm{data},\gamma)$ and exponents $0<d_2<1\leq d_1<\infty$, with $d_1= d_1(\mathrm{data})$ and $d_2= d_2(\mathrm{data})$, such that
\[
\left(\dashint_{B_r}\left( \left|\dfrac{u}{r}\right|^p+a_0\left|\dfrac{u}{r}\right|^q \right)^{d_1}\,\dd\mu\right)^{\frac{1}{d_1}} 
\leq C\left(\dashint_{B_{2\lambda r}} \left( g_u^{p}+ a_0 g_u^q\right)^{d_2}\,\dd\mu\right)^{\frac{1}{d_2}}.
\]
\end{lemma}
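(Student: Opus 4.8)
The plan is to deduce the inequality from Lemma \ref{Lemma2} after replacing the Sobolev--Poincar\'e inequality \eqref{spoincare1} by a Sobolev inequality for functions that vanish on a positive portion of $B_r$. So the first step is to prove that, with the same exponent $s$ as in \eqref{spoincare1} (so that $1<s<p<q<s^*$), one has
\[
\left(\dashint_{B_r}\left|\frac{u}{r}\right|^{s^*}\,\dd\mu\right)^{\frac1{s^*}}
\le C\left(\dashint_{B_{2\lambda r}}g_u^{s}\,\dd\mu\right)^{\frac1s},
\qquad C=C(\mathrm{data},\gamma).
\]
To see this, set $E=\{x\in B_r:u(x)=0\}$, so that $\mu(E)\ge(1-\gamma)\mu(B_r)$ by hypothesis. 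Since $u$ vanishes on $E$,
\[
|u_{B_r}|=\left|\dashint_E(u_{B_r}-u)\,\dd\mu\right|
\le\frac{\mu(B_r)}{\mu(E)}\dashint_{B_r}|u-u_{B_r}|\,\dd\mu
\le\frac{1}{1-\gamma}\left(\dashint_{B_r}|u-u_{B_r}|^{s^*}\,\dd\mu\right)^{\frac1{s^*}},
\]
and hence, by the triangle inequality in $L^{s^*}(B_r)$ followed by \eqref{spoincare1},
\[
\left(\dashint_{B_r}|u|^{s^*}\,\dd\mu\right)^{\frac1{s^*}}
\le\left(1+\frac{1}{1-\gamma}\right)\left(\dashint_{B_r}|u-u_{B_r}|^{s^*}\,\dd\mu\right)^{\frac1{s^*}}
\le C(\mathrm{data},\gamma)\,r\left(\dashint_{B_{2\lambda r}}g_u^{s}\,\dd\mu\right)^{\frac1s}.
\]
Dividing by $r$ gives the claimed inequality; alternatively one can quote the Sobolev inequality for functions vanishing on a large set directly from \cite{KS}.

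Once this inequality is established, the rest of the proof is identical to that of Lemma \ref{Lemma2} with $u-u_{B_r}$ replaced by $u$ and \eqref{spoincare} replaced by the inequality above. Explicitly, I would fix exponents $\frac sp<d_2<1$ and $1\le d_1<\frac{s^*}{q}$ (depending only on $C_D,p,q$ through $s$ and $s^*$), so that $pd_1<qd_1<s^*$ and $s<pd_2<qd_2$; H\"older's inequality then upgrades the displayed $(s^*,s)$-estimate to
\[
\left(\dashint_{B_r}\left|\frac ur\right|^{pd_1}\,\dd\mu\right)^{\frac1{pd_1}}\le C\left(\dashint_{B_{2\lambda r}}g_u^{pd_2}\,\dd\mu\right)^{\frac1{pd_2}}
\]
and the same with $q$ in place of $p$. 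Applying Minkowski's inequality in $L^{d_1}$ on the left-hand side of the asserted inequality, these two estimates, and then the elementary bounds $A^{1/d_2}+B^{1/d_2}\le(A+B)^{1/d_2}$ (valid since $1/d_2>1$) and $x^{d_2}+y^{d_2}\le 2(x+y)^{d_2}$ (valid since $0<d_2<1$), one arrives at
\[
\left(\dashint_{B_r}\left(\left|\frac ur\right|^p+a_0\left|\frac ur\right|^q\right)^{d_1}\,\dd\mu\right)^{\frac1{d_1}}
\le C\left(\dashint_{B_{2\lambda r}}\left(g_u^p+a_0g_u^q\right)^{d_2}\,\dd\mu\right)^{\frac1{d_2}},
\]
exactly as in the chain of inequalities in the proof of Lemma \ref{Lemma2}. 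Finiteness of all the averages follows as there, since $a_0\le a(x)$ and Jensen's inequality give $(\dashint_{B_{2\lambda r}}(g_u^p+a_0g_u^q)^{d_2}\,\dd\mu)^{1/d_2}\le\dashint_{B_{2\lambda r}}H(x,g_u)\,\dd\mu<\infty$.

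The only ingredient that is not a verbatim repetition of Lemma \ref{Lemma2} is the first step, and the one subtlety there is that the constant necessarily depends on $\gamma$ and degenerates as $\gamma\to1$; this dependence is expected and causes no trouble in the subsequent reverse H\"older arguments. Everything else reduces to the same sequence of H\"older and Minkowski inequalities already performed in the proof of Lemma \ref{Lemma2}.
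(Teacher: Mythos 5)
Your proof is correct and follows essentially the same route as the paper: first upgrade the $(s^*,s)$-Sobolev--Poincar\'e inequality to one for $u$ itself using the measure condition, then repeat verbatim the H\"older/Minkowski chain from the proof of Lemma \ref{Lemma2}. The only (harmless) difference is in how you control $|u_{B_r}|$: you average $u_{B_r}-u$ over the zero set of $u$ to obtain the factor $1/(1-\gamma)$, whereas the paper applies H\"older's inequality on the set $\{|u|>0\}$ and absorbs the resulting term $\gamma^{1-1/s^*}\bigl(\dashint_{B_r}|u|^{s^*}\,\dd\mu\bigr)^{1/s^*}$ into the left-hand side; both give a constant depending only on the data and $\gamma$.
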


\begin{proof} 
As in the proof of Lemma \ref{Lemma2}, there exists $s$, with $1<s<p<q<s^*$, such that \eqref{spoincare} holds.
Let $A=\lbrace x\in B_r:\vert u(x)\vert>0\rbrace$. 
We observe that
\begin{equation}\label{eqlemma1boundary}
    \left(\dashint_{B_r}\left\vert\frac{u}{r}\right\vert^{s^*}\,\dd\mu\right)^{\frac{1}{s^*}}
    \leq \left(\dashint_{B_r} \left|\dfrac{u-u_{B_r}}{r}\right|^{s^*}\,\dd\mu\right)^{\frac{1}{s^*}}+\left|\dfrac{u_{B_r}}{r}\right|.
\end{equation}
By H\"older's inequality we obtain
\begin{equation*}
    \vert u_{B_r}\vert
    \le\frac{1}{\mu(B_r)}\int_{A}\vert u\vert\,\dd\mu
    \leq\left(\frac{\mu(A)}{\mu(B_r)}\right)^{1-\frac{1}{s^*}}\left(\dashint_{B_r}\vert u\vert^{s^*}\,\dd\mu\right)^{\frac{1}{s^*}}
    \leq \gamma^{1-\frac{1}{s^*}}\left(\dashint_{B_r}\vert u\vert^{s^*}\,\dd\mu\right)^{\frac{1}{s^*}}.
\end{equation*}
By \eqref{eqlemma1boundary} and \eqref{spoincare} we conclude that
\begin{equation}\label{pzpoincare}
    (1-\gamma^{1-\frac{1}{s^*}})\left(\dashint_{B_r}\left\vert \frac{u}{r}\right\vert^{s^*}\,\dd\mu\right)^{\frac{1}{s^*}}
    \leq\left(\dashint_{B_r} \left|\dfrac{u-u_{B_r}}{r}\right|^{s^*}\,\dd\mu\right)^{\frac{1}{s^*}}
    \leq C\left(\dashint_{B_{2\lambda r}} g_u^{s}\,\dd\mu\right)^{\frac{1}{s}},
\end{equation}
where $C= C(\mathrm{data})$.
Since $pd_1<qd_1<s^*$ and $s<pd_2<qd_2$, by H\"older's inequality, we have
\[
\left(\dashint_{B_r}\left\vert\frac{u}{r}\right\vert^{pd_1}\dd\mu\right)^{\frac{1}{pd_1}}
\leq C\left(\dashint_{B_{2\lambda r}} g_u^{pd_2}\dd\mu\right)^{\frac{1}{pd_2}},
\]
and
\[
\left(\dashint_{B_r}\left\vert\frac{u}{r}\right\vert^{qd_1}\dd\mu\right)^{\frac{1}{qd_1}}
\leq C\left(\dashint_{B_{2\lambda r}} g_u^{qd_2}\dd\mu\right)^{\frac{1}{qd_2}},
\]
where $C= C(\mathrm{data},\gamma)$. 
The rest of the proof follows as in the proof of Lemma \ref{Lemma2}.
\end{proof}

Then we consider a local double phase Sobolev--Poincar\'e inequality.

\begin{lemma}\label{lemma3cap}
Assume that $u\in N^{1,1}_{\loc}({\Omega})$ with $H(\cdot,g_u)\in L^1_{\loc}(\Omega)$.
Let $B_r$ be a ball and assume that there exist $\gamma$, $0<\gamma<1$, such that
\[
\mu(\lbrace x\in B_r:\vert u(x)\vert>0\rbrace)\leq\gamma\mu(B_r).
\]
Then there exists a constant $C=C(\mathrm{data},\gamma)$ and exponents $0<d_2<1\leq d_1<\infty$,  with $d_1= d_1(\mathrm{data})$ and $d_2= d_2(\mathrm{data})$, such that 
\begin{equation*}
\left(\dashint_{B_r} H\left(x,\frac{u}{r}\right)^{d_1}\,\dd\mu\right)^{\frac{1}{d_1}}
\leq C\left(1+\Vert g_u\Vert_{L^p(B_{2\lambda r})}^{q-p}\mu(B_{2\lambda r})^{\frac{\alpha}{Q}-\frac{q-p}{p}}\right)
\left(\dashint_{B_{2\lambda r}} H(x,g_u)^{d_2}\,\dd\mu\right)^{\frac{1}{d_2}}.
\end{equation*}
\end{lemma}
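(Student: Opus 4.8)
The plan is to mirror the proof of Lemma \ref{Lemma3}, replacing the Sobolev--Poincar\'e input from \eqref{spoincare} with the density-type estimate \eqref{pzpoincare} and replacing Lemma \ref{Lemma2} with Lemma \ref{lemma2density}. As before, the proof splits into two cases according to the size of $a_0=\inf_{x\in B_{2\lambda r}}a(x)$ relative to $2[a]_{\alpha}(2C_D^2\mu(B_{2\lambda r}))^{\alpha/Q}$.

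In the first case, when $a_0>2[a]_{\alpha}(2C_D^2\mu(B_{2\lambda r}))^{\alpha/Q}$, I would repeat verbatim the comparison argument from the proof of Lemma \ref{Lemma3}: for every $x,y\in B_{2\lambda r}$ one has $\delta_{\mu}(x,y)\le(2C_D^2\mu(B_{2\lambda r}))^{1/Q}$, and the chain of inequalities using $[a]_\alpha$ and the definition of $a_0$ gives $a_0\le a(x)\le 2a_0$ on $B_{2\lambda r}$. Hence $H(x,u/r)\le C(|u/r|^p+a_0|u/r|^q)$ and $g_u^p+a_0g_u^q\le g_u^p+a(x)g_u^q=H(x,g_u)$, so Lemma \ref{lemma2density} directly yields the claimed estimate with the factor $1$ (which is $\le 1+\Vert g_u\Vert_{L^p(B_{2\lambda r})}^{q-p}\mu(B_{2\lambda r})^{\alpha/Q-(q-p)/p}$).

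In the complementary case $a_0\le2[a]_{\alpha}(2C_D^2\mu(B_{2\lambda r}))^{\alpha/Q}$, I would again run the argument from Lemma \ref{Lemma3} that bounds $\sup_{y\in B_r}a(y)\le C[a]_\alpha\mu(B_{2\lambda r})^{\alpha/Q}$ (the distance estimate $\delta_\mu(x,y)\le C\mu(B_{2\lambda r})^{1/Q}$ for $x\in B_{2\lambda r}$, $y\in B_r$ only uses $B_r\subset B_{2\lambda r}$, so it transfers without change). Then
\[
\left(\dashint_{B_r}H\left(x,\frac{u}{r}\right)^{d_1}\dd\mu\right)^{\frac1{d_1}}
\le\left(\dashint_{B_r}\left|\frac ur\right|^{pd_1}\dd\mu\right)^{\frac1{d_1}}
+C[a]_\alpha\mu(B_{2\lambda r})^{\alpha/Q}\left(\dashint_{B_r}\left|\frac ur\right|^{qd_1}\dd\mu\right)^{\frac1{d_1}},
\]
and factoring out $(\dashint_{B_r}|u/r|^{qd_1}\dd\mu)^{\frac1{d_1}\frac pq}$ as in \eqref{6lemma3} produces the factor $1+[a]_\alpha\mu(B_{2\lambda r})^{\alpha/Q}(\dashint_{B_r}|u/r|^{qd_1}\dd\mu)^{(q-p)/(qd_1)}$. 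Now apply \eqref{pzpoincare} together with H\"older (valid since $qd_1<s^*$): $(\dashint_{B_r}|u/r|^{qd_1}\dd\mu)^{1/(qd_1)}\le C(\dashint_{B_{2\lambda r}}g_u^p\dd\mu)^{1/p}$, which turns the correction term into $\Vert g_u\Vert_{L^p(B_{2\lambda r})}^{q-p}\mu(B_{2\lambda r})^{\alpha/Q-(q-p)/p}$, and turns $(\dashint_{B_r}|u/r|^{qd_1}\dd\mu)^{\frac1{d_1}\frac pq}$ into $C(\dashint_{B_{2\lambda r}}g_u^{pd_2}\dd\mu)^{1/d_2}\le C(\dashint_{B_{2\lambda r}}H(x,g_u)^{d_2}\dd\mu)^{1/d_2}$ (using $s<pd_2$ and $g_u^{pd_2}\le H(x,g_u)^{d_2}$). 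Combining the two cases gives the lemma with $C=C(\mathrm{data},\gamma)$.

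I do not expect a genuine obstacle here: the only substantive change from Lemma \ref{Lemma3} is that the Poincar\'e inequality for $u-u_{B_r}$ is replaced by the inequality \eqref{pzpoincare} for $u$ itself, which is exactly what the density hypothesis $\mu(\{|u|>0\}\cap B_r)\le\gamma\mu(B_r)$ buys, and that Lemma \ref{lemma2density} replaces Lemma \ref{Lemma2} in the first case. The mild bookkeeping point to watch is that the constant now also depends on $\gamma$ through the factor $(1-\gamma^{1-1/s^*})^{-1}$ coming from \eqref{pzpoincare}, and one must check that all averaged integrals are finite, which follows exactly as in Lemma \ref{Lemma2} since $H(\cdot,g_u)\in L^1_{\loc}(\Omega)$ and $B_{2\lambda r}\Subset\Omega$.
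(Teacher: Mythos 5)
Your proposal is correct and follows essentially the same route as the paper's own proof: the same two-case split on $a_0$ versus $2[a]_{\alpha}(2C_D^2\mu(B_{2\lambda r}))^{\alpha/Q}$, with Lemma \ref{lemma2density} replacing Lemma \ref{Lemma2} in the first case and \eqref{pzpoincare} replacing \eqref{spoincare} in the second. The bookkeeping points you flag (the $\gamma$-dependence through $(1-\gamma^{1-1/s^*})^{-1}$ and the finiteness of the averaged integrals) are exactly the right ones.
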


\begin{proof}
As in the proof of Lemma \ref{Lemma3}, we consider two cases \eqref{1C}  and \eqref{3C}.
If \eqref{1C} holds, then as in \eqref{2C} with $\frac{u}{r}$ instead of $\frac{u-u_{B_r}}{r}$ and Lemma \ref{lemma2density}, we obtain 
\begin{equation*}
\left(\dashint_{B_r} H\left(x,\frac{u}{r}\right)^{d_1}\,\dd\mu\right)^{\frac{1}{d_1}}
\leq C \left(\dashint_{B_{2\lambda r}} H(x,g_u)^{d_2}\,\dd\mu\right)^{\frac{1}{d_2}},
\end{equation*}
where $C= C(\mathrm{data},\gamma)$, $d_1= d_1(\mathrm{data})$ and $d_2= d_2(\mathrm{data})$ with $0<d_2<1\leq d_1<\infty$.
On the other hand, if \eqref{3C} holds, then as in \eqref{6lemma3}, we obtain
\begin{align*}
    \left(\dashint_{B_r}H\left(x,\frac{u}{r}\right)^{d_1}\,\dd\mu\right)^{\frac{1}{d_1}}
    &\leq\left(\dashint_{B_r}\left(\left\vert\frac{u}{r}\right\vert^p+C[a]_{\alpha}\mu(B_{2\lambda r})^{\alpha/Q}\left\vert\frac{u}{r}\right\vert^q\right)^{d_1}\,\dd\mu\right)^{\frac{1}{d_1}}\\
    &\leq\left(\dashint_{B_r}\left\vert\frac{u}{r}\right\vert^{pd_1}\,\dd\mu\right)^{\frac{1}{d_1}}
    +C[a]_{\alpha}\mu(B_{2\lambda r})^{\alpha/Q}\left(\dashint_{B_r}\left\vert\frac{u}{r}\right\vert^{qd_1}\,\dd\mu\right)^{\frac{1}{d_1}}\\
    &\leq C\left(\dashint_{B_r}\left\vert\frac{u}{r}\right\vert^{qd_1}\,\dd\mu\right)^{\frac{p}{qd_1}}\left(1+[a]_{\alpha}\mu(B_{2\lambda r})^{\alpha/Q}\left(\dashint_{B_r}\left\vert\frac{u}{r}\right\vert^{qd_1}\,\dd\mu\right)^{\frac{q-p}{qd_1}}\right),
\end{align*}
where $C=C(C_D,\alpha)$.
Since $qd_1<s^*$ and $s<pd_2$,  \eqref{pzpoincare} and H\"older's inequality imply
\[
\left(\dashint_{B_r}\left\vert\frac{u}{r}\right\vert^{qd_1}\dd\mu\right)^{\frac{1}{qd_1}}
\leq C\left(\dashint_{B_{2\lambda r}} g_u^{p}\,\dd\mu\right)^{\frac{1}{p}},
\]
 where $C= C(\mathrm{data},\gamma)$. 
As in the proof of Lemma \ref{Lemma3}, we have
\begin{align*}
    \left(\dashint_{B_r}\left\vert\frac{u}{r}\right\vert^{qd_1}\,\dd\mu\right)^{\frac{q-p}{qd_1}}
    &\leq C\left(\dashint_{B_{2\lambda r}}g_u^p\,\dd\mu\right)^{\frac{q-p}{p}}
    =C\Vert g_u\Vert_{L^p(B_{2\lambda r})}^{q-p}\mu(B_{2\lambda r})^{-\frac{q-p}{p}},
\end{align*}
and thus
\begin{align*}
     \left(\dashint_{B_r}H\left(x,\frac{u}{r}\right)^{d_1}\,\dd\mu\right)^{\frac{1}{d_1}}
     &\leq C\left(\dashint_{B_r}\left\vert\frac{u}{r}\right\vert^{qd_1}\,\dd\mu\right)^{\frac{p}{q\, d_1}}
     \left(1+\Vert g_u\Vert_{L^p(B_{2\lambda r})}^{q-p}\mu(B_{2\lambda r})^{\frac{\alpha}{Q}-\frac{q-p}{p}}\right),
       \end{align*}
where $C= C(\mathrm{data},\gamma)$. 
Since $qd_1<s^*$ and $s<pd_2$, applying \eqref{pzpoincare} as in the proof of Lemma \ref{lemma2density}, we have
\begin{align*}
\left(\dashint_{B_r}\left\vert\frac{u}{r}\right\vert^{qd_1}\,\dd\mu\right)^{\frac{p}{qd_1}}
     &\leq C\left(\dashint_{B_{2\lambda r}}g_u^{pd_2}\,\dd \mu\right)^{\frac{1}{d_2}}\\
     &\leq C \left(	\dashint_{B_{2\lambda r}} \left(g_u^p+a(x)g_u^q\right)^{d_2}\,\dd\mu\right)^{\frac{1}{d_2}}\\
     & = C\left(\dashint_{B_{2\lambda r}} H(x,g_u)^{d_2}\,\dd\mu\right)^{\frac{1}{d_2}},
\end{align*}
where $C= C(\mathrm{data},\gamma)$. This completes the proof.
\end{proof}

\section{Local and global higher integrability results}\label{global}

The main goal of this work is to get global higher integrability for quasiminimizers.
In the metric setting, the improvement of integrability is obtained by using a metric space version of Gehring's lemma, whose proof can be found, for example, in \cite{BB} or \cite{ZG}. 

\begin{lemma}\label{Gehring}
 Let $f\in L^1_{\loc}(\Omega)$ and $g\in L^{\sigma}_{\loc}(X)$, $\sigma>1$, be non-negative functions and let $\lambda>1$. 
 Assume that there exist a constant $C_1$ and an exponent $0<d<1$ such that
	\begin{equation*}
		\dashint_{B_R}f \,\dd\mu
		\leq C_1\left(\left(\dashint_{B_{\lambda R}}f^d \,\dd\mu\right)^{\frac{1}{d}}+\dashint_{B_{\lambda R}}g \,\dd\mu\right),
	\end{equation*}
for every ball $B_R$ with $B_{\lambda R}\Subset\Omega$. 
Then there exist a constant $C_2=C_2(C_D,C_1,d,\lambda)$ and an exponent $\varepsilon=\varepsilon(C_D,C_1,d,\lambda)>0$ such that
	\begin{equation*}
		\left(\dashint_{B_R}f^{1+\varepsilon} \,\dd\mu\right)^{\frac{1}{1+\varepsilon}}
		\leq C_2\left(\dashint_{B_{\lambda R}}f\,\dd\mu
		+\left(\dashint_{B_{\lambda R}}g^{\sigma} \,\dd\mu\right)^{\frac{1}{\sigma}}\right),
	\end{equation*}
for every ball $B_R$ with $B_{\lambda R}\Subset\Omega$. 
\end{lemma}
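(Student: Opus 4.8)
The statement is the classical self‑improving (Gehring) lemma transplanted to a doubling metric measure space; since $\mu$ is doubling, the tools it requires — a basic covering theorem, a Hardy--Littlewood maximal operator, and Lebesgue's differentiation theorem — are all available, and one may in fact just quote \cite{ZG} or the exposition in \cite{BB}. The route I would take has two stages: first promote the hypothesis to a level‑set inequality, then integrate that inequality against $t^{\varepsilon-1}\,\dd t$. For the first stage, fix $B_R$ with $B_{\lambda R}\Subset\Omega$ and put $t_0=C(C_D,\lambda)\dashint_{B_{\lambda R}}f$. For each $t>t_0$, a Calder\'on--Zygmund type stopping‑time argument, built on the basic covering theorem, produces countably many balls $B_i$ with $\lambda B_i\subset B_{\lambda R}$, with $t<\dashint_{B_i}f\le Ct$, with the dilated balls $\{\lambda B_i\}$ of bounded overlap (by doubling), and such that $\{f>t\}\cap B_R\subset\bigcup_i B_i$ up to a $\mu$-null set (Lebesgue differentiation). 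Since $\lambda B_i\Subset\Omega$, the hypothesis applies on each $B_i$.

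On $B_i$ the hypothesis gives $t\le C_1\big((\dashint_{\lambda B_i}f^d)^{1/d}+\dashint_{\lambda B_i}g\big)$, hence one of the two terms is at least $t/(2C_1)$. If the $f^d$-term is the large one, split $\int_{\lambda B_i}f^d$ over $\{f>\theta t\}$ and $\{f\le\theta t\}$ and absorb the latter by taking $\theta=\theta(C_D,C_1,d,\lambda)\in(0,1)$ small; this gives $\mu(\lambda B_i)\le Ct^{-d}\int_{\lambda B_i\cap\{f>\theta t\}}f^d$. If instead the $g$-term is the large one, the same step yields $\mu(\lambda B_i)\le Ct^{-1}\int_{\lambda B_i\cap\{g>\theta t\}}g$. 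Using $\int_{B_i}f\le Ct\,\mu(B_i)\le Ct\,\mu(\lambda B_i)$, summing over $i$, and invoking the bounded overlap of $\{\lambda B_i\}$, we arrive at
\[
\int_{\{f>t\}\cap B_R}f\,\dd\mu\le Ct^{1-d}\int_{\{f>\theta t\}\cap B_{\lambda R}}f^d\,\dd\mu+C\int_{\{g>\theta t\}\cap B_{\lambda R}}g\,\dd\mu,\qquad t>t_0.
\]

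For the second stage, multiply this inequality by $t^{\varepsilon-1}$ and integrate over $(t_0,\infty)$; Tonelli's theorem converts the three terms into constant multiples of $\varepsilon^{-1}\int_{B_R}f^{1+\varepsilon}$, of $(1+\varepsilon-d)^{-1}\theta^{\,d-1-\varepsilon}\int_{B_{\lambda R}}f^{1+\varepsilon}$, and of $\varepsilon^{-1}\theta^{-\varepsilon}\int_{B_{\lambda R}}g^{1+\varepsilon}$, together with a remainder controlled by $t_0^{\varepsilon}\int_{B_{\lambda R}}f$. Multiplying through by $\varepsilon$, the factor multiplying the $f^{1+\varepsilon}$-integral on the right is of size $C\varepsilon\,\theta^{\,d-1-\varepsilon}(1+\varepsilon-d)^{-1}$, which vanishes as $\varepsilon\to0$ because $\theta$ and $d$ are already fixed; thus for $\varepsilon=\varepsilon(C_D,C_1,d,\lambda)>0$ small it can be absorbed — after the standard iteration over nested radii (in the spirit of \cite[Lemma~6.1]{G}) that reconciles the ball $B_R$ on the left with the ball $B_{\lambda R}$ on the right. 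Recalling that $t_0\approx\dashint_{B_{\lambda R}}f$ and using H\"older's inequality with $1+\varepsilon\le\sigma$ to pass from $\int_{B_{\lambda R}}g^{1+\varepsilon}$ to $\mu(B_{\lambda R})\big(\dashint_{B_{\lambda R}}g^{\sigma}\big)^{(1+\varepsilon)/\sigma}$, one obtains the asserted inequality.

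The genuinely delicate point — the main obstacle — is that $\int_{B_R}f^{1+\varepsilon}$ is not known to be finite before the absorption step. The remedy is to run the whole argument with $f$ replaced by the truncation $\min\{f,N\}$ (equivalently, to integrate in $t$ only up to $N$), obtain constants independent of $N$, and let $N\to\infty$ by monotone convergence; the other piece of bookkeeping is keeping every dilated ball $\lambda B_i$ inside $\Omega$ throughout the covering.
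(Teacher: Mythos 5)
The paper does not prove this lemma at all: it is quoted as a known metric-space version of Gehring's lemma, with the proof deferred to \cite{ZG} and \cite{BB}. Your sketch reconstructs essentially the argument given in those references (Calder\'on--Zygmund stopping time on level sets, absorption via the splitting $\{f>\theta t\}\cup\{f\le\theta t\}$, integration against $t^{\varepsilon-1}\,\dd t$, truncation to justify the final absorption), so there is no conflict of method --- you have simply supplied the proof the paper chooses to cite. The outline is correct, and you correctly identify the two genuinely delicate points (a priori finiteness of $\int f^{1+\varepsilon}$, handled by truncating in $t$, and keeping the dilated stopping balls inside $\Omega$). Two small refinements you would need in a written-out version: first, a general doubling space does not support a Besicovitch-type bounded-overlap covering, so the stopping family is obtained instead by taking a pairwise disjoint subfamily via the basic $5r$-covering theorem and covering $\{f>t\}\cap B_R$ by the dilated balls $5B_i$; the stopping condition must then be arranged so that $\dashint_{5B_i}f\le Ct$ as well, which the standard construction gives. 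Second, to pass from $\bigl(\dashint g^{1+\varepsilon}\bigr)^{1/(1+\varepsilon)}$ to $\bigl(\dashint g^{\sigma}\bigr)^{1/\sigma}$ one needs $1+\varepsilon\le\sigma$, so strictly speaking $\varepsilon$ should be replaced by $\min\{\varepsilon,\sigma-1\}$ (a harmless adjustment, and an imprecision already present in the statement as the paper records it). Finally, for the iteration reconciling $B_R$ with $B_{\lambda R}$ you must run the level-set estimate for all intermediate pairs of concentric balls $B_\rho\subset B_{\rho'}$, with $t_0$ blowing up like a power of $(\rho'-\rho)^{-1}$; you allude to this, and it is exactly what \cite[Lemma 6.1]{G} is designed to absorb.
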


Next we discuss local higher integrability of the upper gradient of a local quasiminimizer.

\begin{theorem}\label{mainlocal}
Let $\Omega$ be an open subset of $X$ and let  $\Omega'\Subset\Omega''\Subset\Omega$.
Assume that $u\in N^{1,1}_{\loc}({\Omega})$ with $H(\cdot,g_u)\in L^1_{\loc}(\Omega)$ is a local quasiminimizer in $\Omega$. 
Then there exist a constant $C=C(\mathrm{data},\Omega'',\Vert g_u\Vert_{L^p(\Omega'')})$ and an exponent $\varepsilon=\varepsilon(\mathrm{data},\Omega'',\Vert g_u\Vert_{L^p(\Omega'')})>0$ such that 
\[
 \left(\int_{\Omega'}H(x,g_u)^{1+\varepsilon}\,\dd\mu\right)^{\frac{1}{1+\varepsilon}}
\leq C\int_{\Omega''} H(x,g_u)\,\dd\mu.
\]
\end{theorem}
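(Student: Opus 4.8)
The plan is to combine the local energy estimate (Lemma~\ref{DeGiorgiLemma1}) with the interior double phase Sobolev--Poincar\'e inequality (Lemma~\ref{Lemma3}) to produce a reverse H\"older inequality for $f=H(\cdot,g_u)$, and then invoke Gehring's lemma (Lemma~\ref{Gehring}). First I would fix concentric balls $B_r\subset B_R$ with $B_{2\lambda R}\Subset\Omega''$ and apply Lemma~\ref{DeGiorgiLemma1} to the pair $B_R\subset B_{2\lambda R}$ (after a harmless renaming, so that the outer ball is comparable to $B_{2\lambda R}$); dividing by $\mu(B_R)$ and using the doubling property to replace $\mu(B_R)$ by $\mu(B_{2\lambda R})$ up to a constant, this gives
\[
\dashint_{B_R}H(x,g_u)\,\dd\mu
\leq C\dashint_{B_{2\lambda R}}H\!\left(x,\tfrac{u-u_{B_{2\lambda R}}}{R}\right)\,\dd\mu,
\]
where the $R^{-p}$ and $R^{-q}$ factors coming from $(R-r)^{-1}$ are absorbed by choosing $r=R/2$, say, and the radius inside $H$ is comparable to $R$.

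Next I would feed the right-hand side into Lemma~\ref{Lemma3} (applied on the ball $B_R$ with dilated ball $B_{2\lambda R}$), obtaining
\[
\dashint_{B_R}H(x,g_u)\,\dd\mu
\leq C\Bigl(1+\Vert g_u\Vert^{q-p}_{L^p(B_{2\lambda R})}\mu(B_{2\lambda R})^{\frac{\alpha}{Q}-\frac{q-p}{p}}\Bigr)
\left(\dashint_{B_{2\lambda R}}H(x,g_u)^{d_2}\,\dd\mu\right)^{\frac{1}{d_2}}.
\]
Here I would use the exponent restriction \eqref{pqcond}, namely $\frac{\alpha}{Q}-\frac{q-p}{p}\geq0$, together with $B_{2\lambda R}\subset\Omega''$ and the doubling property, to bound the prefactor by a constant $C=C(\mathrm{data},\Omega'',\Vert g_u\Vert_{L^p(\Omega'')})$: the term $\mu(B_{2\lambda R})^{\frac{\alpha}{Q}-\frac{q-p}{p}}$ is controlled because the exponent is nonnegative and $\mu(B_{2\lambda R})\le\mu(\Omega'')<\infty$, while $\Vert g_u\Vert_{L^p(B_{2\lambda R})}^{q-p}\le\Vert g_u\Vert_{L^p(\Omega'')}^{q-p}$. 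This produces exactly the hypothesis of Lemma~\ref{Gehring} with $f=H(\cdot,g_u)$, $d=d_2\in(0,1)$, $g\equiv0$ (or any fixed $L^\sigma$ majorant, e.g.\ $g=1$ with $\sigma$ chosen freely, so that the $g$-term is harmless), and dilation factor $2\lambda$, valid for all balls $B_R$ with $B_{2\lambda R}\Subset\Omega''$.

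Applying Lemma~\ref{Gehring} on $\Omega''$ then yields an $\varepsilon>0$, depending on $C_D$, the constant above, $d_2$ and $\lambda$ --- hence on $\mathrm{data}$, $\Omega''$ and $\Vert g_u\Vert_{L^p(\Omega'')}$ --- and a reverse H\"older estimate
\[
\left(\dashint_{B_R}H(x,g_u)^{1+\varepsilon}\,\dd\mu\right)^{\frac{1}{1+\varepsilon}}
\leq C\dashint_{B_{2\lambda R}}H(x,g_u)\,\dd\mu
\]
for all $B_R$ with $B_{2\lambda R}\Subset\Omega''$. Finally I would pass from this ball-wise estimate to the sets $\Omega'\Subset\Omega''$ by a standard covering argument: cover $\overline{\Omega'}$ by finitely many balls $B_{R_i}$ with $B_{2\lambda R_i}\subset\Omega''$, sum the $(1+\varepsilon)$-integrals, use the finite overlap of the dilated balls together with doubling to control $\sum_i\mu(B_{2\lambda R_i})\,\bigl(\dashint_{B_{2\lambda R_i}}H(x,g_u)\,\dd\mu\bigr)^{1+\varepsilon}\le C\bigl(\int_{\Omega''}H(x,g_u)\,\dd\mu\bigr)^{1+\varepsilon}$ after using that $t\mapsto t^{1+\varepsilon}$ is superadditive only after extracting one power of the total integral --- more precisely, bound each average by $\mu(B_{2\lambda R_i})^{-1}\int_{\Omega''}H(x,g_u)\,\dd\mu$ inside the $(1+\varepsilon)$-power and absorb the resulting $\mu(B_{2\lambda R_i})^{-\varepsilon}$ into the constant since radii are bounded below. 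This gives the claimed inequality $\bigl(\int_{\Omega'}H(x,g_u)^{1+\varepsilon}\,\dd\mu\bigr)^{1/(1+\varepsilon)}\le C\int_{\Omega''}H(x,g_u)\,\dd\mu$.

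The main obstacle I anticipate is not any single inequality but the bookkeeping of the dilation constants and the radii: Lemma~\ref{DeGiorgiLemma1} expands $B_r$ to an arbitrary $B_R$, Lemma~\ref{Lemma3} requires the concentric dilate $B_{2\lambda r}$, and Lemma~\ref{Gehring} needs a clean two-ball reverse H\"older statement with a single dilation factor, so one must choose the intermediate radii (e.g.\ $r,\,2r,\,2\lambda\cdot 2r$) carefully so that all enlarged balls still sit inside $\Omega''$ and the constants depend only on $\lambda$ and $C_D$. The second delicate point is the prefactor in Lemma~\ref{Lemma3}: making it a genuine constant relies crucially on $\frac{q}{p}\le 1+\frac{\alpha}{Q}$ so that $\frac{\alpha}{Q}-\frac{q-p}{p}\ge0$, and on replacing the local $L^p$-norm of $g_u$ by its norm over the fixed intermediate set $\Omega''$ --- which is why the final constant and $\varepsilon$ must be allowed to depend on $\Omega''$ and $\Vert g_u\Vert_{L^p(\Omega'')}$.
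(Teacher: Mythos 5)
Your proposal is correct and follows essentially the same route as the paper: Lemma~\ref{DeGiorgiLemma1} plus Lemma~\ref{Lemma3} yield a reverse H\"older inequality for $H(\cdot,g_u)$ whose prefactor is bounded uniformly on $\Omega''$ via $\frac{\alpha}{Q}-\frac{q-p}{p}\ge 0$, and then Gehring's lemma and a finite covering of $\overline{\Omega'}$ finish the argument. The only blemish is the mismatch of centers/radii in your first two displays (the energy estimate should go from $B_{r/2}$ to $B_r$ so that Lemma~\ref{Lemma3} on $B_r$ produces the outer ball $B_{2\lambda r}$, exactly as the paper does), which you correctly identify and resolve in your closing remarks.
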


\begin{proof}
Let $B_{2\lambda r}\Subset\Omega''$. By Lemma \ref{DeGiorgiLemma1}, there exists a constant $C=C(K,q)$ such that
\[
\int_{B_{\frac r2}}H(x,g_u)\,\dd\mu
\leq C\int_{B_r} H\left(x,\frac{u-u_{B_r}}{r}\right)\,\dd\mu.
\]
On the other hand, by Lemma \ref{Lemma3}, we obtain 
\[
\dashint_{B_r} H\left(x,\frac{u-u_{B_r}}{r}\right)\,\dd\mu
\leq C\left(1+\Vert g_u\Vert^{q-p}_{L^p(B_{2\lambda r})}\mu(B_{2\lambda r})^{\frac{\alpha}{Q}-\frac{q-p}{p}}\right)
 \left(\dashint_{B_{2\lambda r}} H(x,g_u)^{d}\,\dd\mu\right)^{\frac{1}{d}},
\]
where $0<d=d(\mathrm{data})<1$ and $C=C(\mathrm{data})$.
This implies that
\[
\dashint_{B_{\frac r2}}H(x,g_u)\,\dd \mu
\leq C\left(1+\Vert g_u\Vert^{q-p}_{L^p(B_{2\lambda r})}\mu(B_{2\lambda r})^{\frac{\alpha}{Q}-\frac{q-p}{p}}\right)
 \left(\dashint_{B_{2\lambda r}}H(x,g_u)^{d}\,\dd\mu\right)^{\frac{1}{d}},
\]
where $C=C(\mathrm{data})$.

By \eqref{pqcond} we have $\frac{\alpha}{Q}-\frac{q-p}{p}\ge0$ and thus we obtain
\[
\dashint_{B_{r/2}}H(x,g_u)\, \dd \mu
\leq C\left(1+\Vert g_u\Vert^{q-p}_{L^p(\Omega'')}\mu(\Omega'')^{\frac{\alpha}{Q}-\frac{q-p}{p}}\right)
 \left(\dashint_{B_{2\lambda r}} H(x,g_u)^{d}\,\dd\mu\right)^{\frac{1}{d}},
\]
where  $C=C(\mathrm{data})$.
This implies that
\begin{equation}\label{drh}
\dashint_{B_{r/2}}H(x,g_u)\, \dd \mu
\leq C\left(\dashint_{B_{2\lambda r}} H(x,g_u)^{d}\,\dd\mu\right)^{\frac{1}{d}},
\end{equation}
for every ball with $B_{2\lambda r}\Subset\Omega''$ with $C=C(\mathrm{data},\Omega'',\Vert g_u\Vert_{L^p(\Omega'')})$.
The constant $C$ depends on $\Omega''$ and on $\Vert g_{u}\Vert_{L^p(\Omega'')}$, but once 
$u$ and $\Omega''$ are fixed, the obtained reverse H\"older inequality is uniform over all balls with $B_{2\lambda r}\Subset\Omega''$. 
By Lemma \ref{Gehring}, there exist a constant $C=C(\mathrm{data},\Omega'',\Vert g_u\Vert_{L^p(\Omega'')})$ and an exponent  $\varepsilon=\varepsilon(\mathrm{data},\Omega'',\Vert g_u\Vert_{L^p(\Omega'')})>0$ such that
\[
\left(\dashint_{B_{r/2}} H(x,g_u)^{1+\varepsilon}\,\dd\mu\right)^{\frac{1}{1+\varepsilon}}
\le C\dashint_{B_{2\lambda r}}H(x,g_u)\, \dd \mu,
\]
for every ball with $B_{2\lambda r}\Subset\Omega''$. 
Since $\overline{\Omega'}$ is compact, we can cover it by a finite number of such balls and conclude that 
\[
 \left(\int_{\Omega'}H(x,g_u)^{1+\varepsilon}\,\dd\mu\right)^{\frac{1}{1+\varepsilon}}
\leq C\int_{\Omega''} H(x,g_u)\,\dd\mu.
\]
\end{proof}

\begin{remark}
If the measure is Ahlfors--David regular, see \eqref{ahlfors}, we have
\[
\mu(B_{2\lambda r})^{\frac{\alpha}{Q}-\frac{q-p}{p}}
\le Cr^{\alpha-Q(\frac qp-1)},
\]
where $\alpha-Q(\frac qp-1)\ge0$ by  \eqref{pqcond}.
As in the proof of Theorem \ref{mainlocal}, there exist a constant $C=C(\mathrm{data})$ and an exponent $0<d=d(\mathrm{data})<1$ such that
\[
\dashint_{B_{\frac r2}}H(x,g_u)\,\dd \mu
\leq C\left(1+\Vert g_u\Vert^{q-p}_{L^p(B_{2\lambda r})}\right)
 \left(\dashint_{B_{2\lambda r}}H(x,g_u)^{d}\,\dd\mu\right)^{\frac{1}{d}},
\]
whenever $B_{2\lambda r}\Subset\Omega$ with $0<r\le 1$.
A similar argument can also be applied in Lemma \ref{Lemma3} and Lemma \ref{lemma3cap}.
 \end{remark}

Finally, we are ready to prove the main result of the paper, which states higher integrability for the weak upper gradient of a quasiminimizer over the entire domain
under the assumption that the domain satisfies a uniform measure density property.

\begin{theorem}\label{mainglobal}
Assume that $\Omega$ is a bounded open set in $X$ with the property that there exists a constant $\gamma$, $0<\gamma<1$, for which
\[
\mu(B_{R}(x)\cap\Omega)\leq\gamma\mu(B_{R}(x)),
\]
for every $x\in X\setminus\Omega$ and $R>0$.
Assume that $w\in N^{1,1}(\Omega)$ such that $H(\cdot, g_w)\in L^{\sigma}(\Omega)$ for some $\sigma>1$. 
Assume that $u\in N^{1,1}({\Omega})$ with $H(\cdot,g_u)\in L^1(\Omega)$ is a quasiminimizer in $\Omega$ with $u-w\in N^{1,1}_0(\Omega)$.
Then there exist a constant $C=C(\mathrm{data},\gamma,\Omega,\Vert g_{u-w}\Vert_{L^p(\Omega)})$ and an exponent $\varepsilon=\varepsilon(\mathrm{data},\gamma,\Omega,\Vert g_{u-w}\Vert_{L^p(\Omega)})>0$ such that
\[
\left(\int_{\Omega}H(x,g_u)^{1+\varepsilon}\ \dd\mu\right)^{\frac{1}{1+\varepsilon}}
\leq C\left(\int_{\Omega}H(x,g_u)\ \dd\mu+\left(\int_{\Omega}H(x,g_w)^{\sigma}\ \dd\mu\right)^{\frac{1}{\sigma}}\right).
\]
\end{theorem}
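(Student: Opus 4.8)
\textbf{Proof plan for Theorem \ref{mainglobal}.}

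The plan is to establish a reverse H\"older inequality for $H(\cdot,g_u)$ that holds \emph{uniformly} on all balls (not just those compactly contained in $\Omega$), extend $H(\cdot,g_u)$ and $H(\cdot,g_w)$ by zero outside $\Omega$, and then apply Gehring's lemma (Lemma \ref{Gehring}) on large balls covering $X$. The global energy estimate of Lemma \ref{DeGiorgiLemma2} and the boundary Sobolev--Poincar\'e inequality of Lemma \ref{lemma3cap} are exactly the two ingredients needed to handle balls that meet the boundary, while interior balls are handled precisely as in Theorem \ref{mainlocal}.

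First I would fix a ball $B_r$ with $2\lambda r\le\diam\Omega$ (say) and split into two cases. If $B_{2\lambda r}\Subset\Omega$, then \eqref{drh} from the proof of Theorem \ref{mainlocal} already gives the reverse H\"older inequality with the right-hand side controlled by $\Vert g_u\Vert_{L^p(\Omega)}$; here one also absorbs the $g_w$-term since $H(\cdot,g_w)\in L^\sigma(\Omega)\subset L^1(\Omega)$ trivially contributes a term of the desired form. If instead $B_{2\lambda r}$ meets $X\setminus\Omega$, pick a point $x_0\in B_{2\lambda r}\setminus\Omega$; then for a slightly larger ball $B=B_{R}(x_0)$ the measure density hypothesis gives $\mu(B\cap\Omega)\le\gamma\mu(B)$. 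Apply Lemma \ref{DeGiorgiLemma2} with concentric balls to bound $\int_{B_{r}\cap\Omega}H(x,g_u)\,\dd\mu$ by $\dashint$-averages of $H(x,(u-w)/(R-r))$ and $H(x,g_w)$ over $B_R\cap\Omega$. To the first term apply Lemma \ref{lemma3cap} to the function $u-w$, which vanishes outside $\Omega$ hence satisfies the density assumption with the given $\gamma$; this converts the $L^1$-average of $H(x,(u-w)/\rho)$ into the sub-one power average $(\dashint H(x,g_{u-w})^{d_2})^{1/d_2}$ times the factor $1+\Vert g_{u-w}\Vert_{L^p}^{q-p}\mu(B_{2\lambda R})^{\alpha/Q-(q-p)/p}$, which is bounded once $\Omega$ and $u-w$ are fixed (using $\alpha/Q-(q-p)/p\ge0$ from \eqref{pqcond} and boundedness of $\Omega$). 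Finally bound $H(x,g_{u-w})\le 2^q(H(x,g_u)+H(x,g_w))$ to close the estimate. Combining both cases and extending by zero, one obtains, for every ball $B_R$ in $X$ with radius comparable to $\diam\Omega$,
\[
\dashint_{B_R}H(x,g_u)\,\dd\mu
\leq C\left(\left(\dashint_{B_{\lambda R}}H(x,g_u)^{d}\,\dd\mu\right)^{\frac1d}+\dashint_{B_{\lambda R}}H(x,g_w)\,\dd\mu\right),
\]
with $C=C(\mathrm{data},\gamma,\Omega,\Vert g_{u-w}\Vert_{L^p(\Omega)})$ and $0<d<1$; balls large enough to contain $\Omega$ are trivial since the right side then already dominates the left.

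Then I would invoke Lemma \ref{Gehring} with $f=H(\cdot,g_u)$ (extended by zero), $g=H(\cdot,g_w)$ (extended by zero, so $g\in L^\sigma_{\loc}(X)$ by hypothesis), to produce $\varepsilon>0$ and $C$ with the dependencies stated, giving
\[
\left(\dashint_{B_R}H(x,g_u)^{1+\varepsilon}\,\dd\mu\right)^{\frac1{1+\varepsilon}}
\le C\left(\dashint_{B_{\lambda R}}H(x,g_u)\,\dd\mu+\left(\dashint_{B_{\lambda R}}H(x,g_w)^\sigma\,\dd\mu\right)^{\frac1\sigma}\right);
\]
choosing $B_R\supset\Omega$ and multiplying through by $\mu(B_R)$ yields the claimed global bound.

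The main obstacle is the bookkeeping at the boundary: one must check that the density assumption on $\Omega$ transfers to $u-w$ on the relevant balls (it does because $u-w\in N^{1,1}_0(\Omega)$ vanishes a.e.\ outside $\Omega$, and $\{|u-w|>0\}\cap B_r\subset\Omega\cap B_r$, with the density hypothesis applied at a center in $X\setminus\Omega$ after enlarging the ball by a fixed dilation), and that the geometric factor $1+\Vert g_{u-w}\Vert_{L^p}^{q-p}\mu(B_{2\lambda R})^{\alpha/Q-(q-p)/p}$ stays bounded, which is where boundedness of $\Omega$ and condition \eqref{pqcond} enter. The absorption/iteration inside Lemma \ref{DeGiorgiLemma2} is already done there, so no further hole-filling is needed.
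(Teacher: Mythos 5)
Your plan follows essentially the same route as the paper: the same two-case split (interior balls handled via \eqref{drh}, boundary balls via Lemma \ref{DeGiorgiLemma2} applied on an enlarged ball centred at a point of $X\setminus\Omega$, followed by Lemma \ref{lemma3cap} applied to $u-w$, whose support condition is exactly what the measure density hypothesis supplies), the same bound on the factor $1+\Vert g_{u-w}\Vert_{L^p}^{q-p}\mu(B_{2\lambda R})^{\alpha/Q-(q-p)/p}$ using \eqref{pqcond} and boundedness of $\Omega$, and the same final appeal to Lemma \ref{Gehring} for $f=H(\cdot,g_u)\chi_\Omega$ and $g=H(\cdot,g_w)\chi_\Omega$.

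The one step that would fail as written is the endgame. You dismiss balls large enough to contain $\Omega$ on the grounds that ``the right side then already dominates the left,'' but for $0<d<1$ Jensen's inequality gives $\bigl(\dashint_{B_{\lambda R}}f^d\,\dd\mu\bigr)^{1/d}\le\dashint_{B_{\lambda R}}f\,\dd\mu$, so the $d$-power average on the right is the \emph{smaller} quantity and the reverse H\"older inequality is never trivial; moreover your boundary estimate genuinely degenerates on large balls, since the constant contains $\mu(B_{2\lambda R})^{\alpha/Q-(q-p)/p}$, which is unbounded as $R\to\infty$ when the exponent is positive. The paper avoids this entirely: it proves the reverse H\"older inequality only for balls of radius at most $1$ (so that the enlarged balls stay inside a fixed bounded neighbourhood $\Omega^*$ of $\Omega$, keeping the constant uniform), applies Gehring's lemma on those balls, and then covers the bounded set $\Omega$ by finitely many balls of radius at most $1$ and sums. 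Your argument can be repaired either by adopting that finite covering, or by choosing the Gehring domain to be a single ball of radius comparable to $\lambda\diam\Omega$ so that every ball entering the hypothesis of Lemma \ref{Gehring} has radius at most a fixed multiple of $\diam\Omega$ (where your estimates do apply with a uniform constant) --- but in either case the ``trivial for large balls'' claim must be dropped.
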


\begin{proof}
Let $B_r$ be a ball with $B_r\cap\Omega\ne\emptyset$ and $0<r\le1$.
Then there exist two alternatives: either $B_{3\lambda r}\subset\Omega$ or $B_{3\lambda r}\setminus\Omega\ne\emptyset$.
If $B_{3\lambda r}\subset\Omega$, then $B_{2\lambda r}\Subset\Omega$ and, as in \eqref{drh}, we have
\begin{equation}\label{rhlocal}
\dashint_{B_{r/2}}H(x,g_u)\, \dd \mu
\leq C\left(\dashint_{B_{2\lambda r}} H(x,g_u)^{d}\,\dd\mu\right)^{\frac{1}{d}},
\end{equation}
where $C=C(\mathrm{data},\Omega,\Vert g_u\Vert_{L^p(\Omega)})$.

Then we discuss the case $B_{3\lambda r}\setminus\Omega\ne\emptyset$.
Let $x_0\in B_{3\lambda r}\setminus\Omega$ and consider $B_{R}(x_0)$ with $R=8\lambda r$.
Since the center of $B_r$ is contained in $B_{3\lambda r}(x_0)$, we have $B_r\subset B_{4\lambda r}(x_0)=B_{\frac R2}(x_0)$.
 Let $B_R=B_{R}(x_0)$.
We note that $B_{3\lambda r}\subset B_{R}$ and $\mu(B_{R}\cap\Omega)\leq\gamma\mu(B_{R})$,
with $0<\gamma<1$.
Since $u-w=0$ $\mu$-almost everywhere in $X\setminus\Omega$, we obtain 
\[
\mu(\lbrace x\in B_{R}: \vert u(x)-w(x)\vert>0\rbrace)\leq\gamma\mu(B_R).
\] 

By Lemma \ref{DeGiorgiLemma2} there exists a constant $C=C(K,q)$ such that
 \begin{equation}\label{suneq}
  \int_{B_{\frac R2}\cap\Omega}H(x, g_u)\,\dd\mu
  \leq C\left( \int_{B_R\cap\Omega}H\left(x, \frac{u-w}{R}\right)\,\dd\mu
  +\int_{B_R\cap\Omega}H(x, g_w)\,\dd\mu\right).
\end{equation}
We consider the first term on the right-hand side of \eqref{suneq}.
Since $u-w=0$ $\mu$-almost everywhere in $X\setminus\Omega$ and $g_{u-w}=0$ $\mu$-almost everywhere in $X\setminus\Omega$, by applying Lemma \ref{lemma3cap} with $u-w$, we obtain
\begin{align*}
   &\frac{1}{\mu(B_R)}\int_{B_R\cap\Omega}H\left(x, \frac{u-w}{R}\right)\,\dd\mu
   = \dashint_{B_R}H\left(x, \frac{u-w}{R}\right)\,\dd\mu\\
  &\qquad\leq  C\left(1+\Vert g_{u-w}\Vert_{L^p(B_{2\lambda R})}^{q-p}\mu(B_{2\lambda R})^{\frac{\alpha}{Q}-\frac{q-p}{p}}\right)
  \left(\dashint_{B_{2\lambda R}}H(x,g_{u-w})^d\,\dd\mu\right)^{\frac{1}{d}},
\end{align*}
where $C=C(\mathrm{data},\gamma)$ and $0<d= d(\mathrm{data})<1$.
Since $B_r\subset B_{\frac R2}(x_0)$, $B_r\cap\Omega\ne\emptyset$ and $0<r\le1$, 
we have $B_{R}\cap\Omega\ne\emptyset$ and $2\lambda R=16\lambda^2r\le16\lambda^2$.
This implies that
\[
B_{2\lambda R}\subset\Omega^*=\{y\in X:\dist(y,\Omega)<24\lambda^2\}.
\]
By \eqref{pqcond} we have $\frac{\alpha}{Q}-\frac{q-p}{p}\ge0$ and thus we obtain
\[
\frac{1}{\mu(B_R)}\int_{B_R\cap\Omega}H\left(x, \frac{u-w}{R}\right)\,\dd\mu
\leq  C\left(1+\Vert g_{u-w}\Vert_{L^p(\Omega)}^{q-p}\mu(\Omega^*)^{\frac{\alpha}{Q}-\frac{q-p}{p}}\right)
  \left(\dashint_{B_{2\lambda R}}H(x,g_{u-w})^d\,\dd\mu\right)^{\frac{1}{d}}.
\]
This implies that
\begin{align*}
\frac{1}{\mu(B_R)}\int_{B_R\cap\Omega}H\left(x, \frac{u-w}{R}\right)\,\dd\mu
&\leq C\left(\dashint_{B_{2\lambda R}} H(x,g_{u-w})^{d}\,\dd\mu\right)^{\frac{1}{d}}\\
&=C\left(\frac{1}{\mu(B_{2\lambda R})}\int_{B_{2\lambda R}\cap\Omega} H(x,g_{u-w})^{d}\,\dd\mu\right)^{\frac{1}{d}},
\end{align*}
where $C=C(\mathrm{data},\gamma,\Omega,\Vert g_{u-w}\Vert_{L^p(\Omega)})$.
Thus we have
\begin{equation}\label{spiralequation}
\begin{split}
&\frac{1}{\mu(B_R)}\int_{B_R\cap\Omega}H\left(x, \frac{u-w}{R}\right)\,\dd\mu
\leq C\left(\frac{1}{\mu(B_{2\lambda R})}\int_{B_{2\lambda R}\cap\Omega} H(x,g_{u-w})^{d}\,\dd\mu\right)^{\frac{1}{d}}\\
   &\qquad\leq C\left(\frac{1}{\mu(B_{2\lambda R})}\int_{B_{2\lambda R}\cap\Omega}H(x,g_{u}+g_w)^d\,\dd\mu\right)^{\frac{1}{d}}\\
    &\qquad\leq C\left(\frac{1}{\mu(B_{2\lambda R})}\int_{B_{2\lambda R}\cap\Omega}\left( H(x,g_u)^d+H(x,g_w)^d\right)\,\dd\mu\right)^{\frac{1}{d}}\\
    &\qquad\leq C\left(\left(\frac{1}{\mu(B_{2\lambda R})}\int_{B_{2\lambda R}\cap\Omega} H(x,g_u)^d\,\dd\mu\right)^{\frac{1}{d}}
    +\left(\frac{1}{\mu(B_{2\lambda R})}\int_{B_{2\lambda R}\cap\Omega} H(x,g_w)^d\,\dd\mu\right)^{\frac{1}{d}}\right)\\
     &\qquad\leq C\left(\left(\frac{1}{\mu(B_{2\lambda R})}\int_{B_{2\lambda R}\cap\Omega} H(x,g_u)^d\,\dd\mu\right)^{\frac{1}{d}}
     +\frac{1}{\mu(B_{2\lambda R})}\int_{B_{2\lambda R}\cap\Omega} H(x,g_w)\,\dd\mu\right),
     \end{split}
\end{equation}
where $C=C(\mathrm{data},\gamma,\Omega,\Vert g_{u-w}\Vert_{L^p(\Omega)})$.
By \eqref{suneq}, \eqref{spiralequation} and the doubling property \eqref{doubling}, we obtain
\begin{align*}
 &\frac{1}{\mu(B_{\frac R2})}\int_{B_{\frac R2}\cap\Omega}H(x,g_u)\,\dd\mu\\
  &\qquad\leq C\left(\frac{1}{\mu(B_R)}\int_{B_R\cap\Omega}H\left(x, \frac{u-w}{R}\right)\,\dd\mu
 +\frac{1}{\mu(B_{2\lambda R})}\int_{B_{{2\lambda R}}\cap\Omega}H(x,g_w)\ \dd\mu \right)\\
 &\qquad\leq C\left(\left(\frac{1}{\mu(B_{2\lambda R})}\int_{B_{2\lambda R}\cap\Omega}H(x,g_u)^{d}\,\dd\mu\right)^\frac{1}{d}
 + \frac{1}{\mu(B_{2\lambda R})}\int_{B_{2\lambda R}\cap\Omega}H(x,g_w)\,\dd\mu\right),
\end{align*}
where $C=C(\mathrm{data},\gamma,\Omega,\Vert g_{u-w}\Vert_{L^p(\Omega)})$.

Let $f=H(x,g_u)\chi_\Omega$ and $g=H(x,g_w)\chi_\Omega$.
Since $B_{\frac r2}\subset B_r\subset B_{\frac R2}$, we obtain
\begin{align*}
\dashint_{B_{\frac r2}}f\,\dd\mu
&\le C\dashint_{B_{\frac R2}}f\,\dd\mu
\leq C\left(\left(\dashint_{B_{2\lambda R}}f^d\,\dd\mu\right)^{\frac{1}{d}}
+\dashint_{B_{2\lambda R}}g\,\dd\mu\right)\\
&=C\left(\left(\dashint_{B_{16\lambda^2 r}}f^d\,\dd\mu\right)^{\frac{1}{d}}
+\dashint_{B_{16\lambda^2 r}}g\,\dd\mu\right),
\end{align*}
whenever  $B_{3\lambda r}\setminus\Omega\ne\emptyset$.
If $B_{3\lambda r}\subset\Omega$, by \eqref{rhlocal} we have
\[
\dashint_{B_{\frac r2}}f\,\dd\mu
\leq C\left(\dashint_{B_{2\lambda r}}f^d\,\dd\mu\right)^{\frac{1}{d}}
\le C\left(\left(\dashint_{B_{16\lambda^2 r}}f^d\,\dd\mu\right)^{\frac{1}{d}}
+\dashint_{B_{16\lambda^2 r}}g\,\dd\mu\right).
\]
It follows that
\[
\dashint_{B_{\frac r2}}f\,\dd\mu
\le C\left(\left(\dashint_{B_{16\lambda^2 r}}f^d\,\dd\mu\right)^{\frac{1}{d}}
+\dashint_{B_{16\lambda^2 r}}g\,\dd\mu\right)
\]
for every ball $B_r$ in $X$ with $0<r\le1$.
Note that this is trivially true for balls with $B_r\cap\Omega=\emptyset$, since the left-hand side is zero.
By a straight forward covering argument we have
\[
\dashint_{B_{\frac r2}}f\,\dd\mu
\le C\left(\left(\dashint_{B_r}f^d\,\dd\mu\right)^{\frac{1}{d}}
+\dashint_{B_r}g\,\dd\mu\right)
\]
for every ball $B_r$ in $X$ with $0<r\le1$.
Here $C=C(\mathrm{data},\Omega,\gamma,\Vert g_{u-w}\Vert_{L^p(\Omega)})$ and $0<d=d(\mathrm{data})<1$.
Note carefully, that the constant $C$ depends on the underlying set $\Omega$ and on $\Vert g_{u-w}\Vert_{L^p(\Omega)}$, but once the domain
$\Omega$ and the boundary function $w$ are fixed, the obtained reverse H\"older inequality is uniform over balls $B_r$ in $X$ with $0<r\le1$.
By an application of Lemma \ref{Gehring}, there exist a constant $C=C(\mathrm{data},\gamma,\Omega,\Vert g_{u-w}\Vert_{L^p(\Omega)})$ and an exponent $\varepsilon=\varepsilon(\mathrm{data},\Omega,\gamma,\Vert g_{u-w}\Vert_{L^p(\Omega)})>0$ such that
$$
\left(\dashint_{B_{\frac r2}}f^{1+\varepsilon}\,\dd\mu\right)^{\frac{1}{1+\varepsilon}}
\leq C\left(\dashint_{B_r}f\,\dd\mu
+\left(\dashint_{B_r}g^{\sigma}\,\dd\mu\right)^{\frac{1}{\sigma}}\right),
$$
for every ball $B_r$ in $X$ with $0<r\le1$.
Thus, we have
\begin{equation*}
\begin{split}
&\left(\frac{1}{\mu(B_{\frac r2})}\int_{B_{\frac r2}\cap \Omega}H(x,g_u)^{1+\varepsilon}\,\dd\mu\right)^{\frac{1}{1+\varepsilon}}\\
&\qquad\leq C\left(\frac{1}{\mu(B_r)}\int_{B_r\cap \Omega}H(x,g_u)\,\dd\mu
+\left(\frac{1}{\mu(B_r)}\int_{B_r\cap \Omega}H(x,g_w)^{\sigma}\,\dd\mu\right)^{\frac{1}{\sigma}}\right),
\end{split}
\end{equation*}
for every ball with $B_r$ in $X$ with $0<r\le1$.
Since $\Omega$ is bounded, we may cover it by a finite number of balls $B_{r_j}(x_j)$, $j=1,2,\dots,N$, with $0<r_j\le1$.
By summing over $j=1,\dots,N$, we obtain
\[
\left(\int_{\Omega}H(x,g_u)^{1+\varepsilon}\ \dd\mu\right)^{\frac{1}{1+\varepsilon}}
\leq C\left(\int_{\Omega}H(x,g_u)\ \dd\mu+\left(\int_{\Omega}H(x,g_w)^{\sigma}\ \dd\mu\right)^{\frac{1}{\sigma}}\right),
\]
where $C=C(\mathrm{data},\gamma,\Omega,\Vert g_{u-w}\Vert_{L^p(\Omega)})$.
\end{proof}

\section*{Acknowledgements}
Part of this material is based upon work supported by the Swedish Research Council while the first author and the second author were in residence at Institut Mittag-Leffler in Djursholm, Sweden during the Research Program Geometric Aspects of Nonlinear Partial Differential Equations in 2022.

The second author is a member of the Gruppo Nazionale per l'Analisi Matematica, la Probabilit\`{a} e le loro Applicazioni (GNAMPA) of the Istituto Nazionale di Alta Matematica (INdAM).
The second author was partly supported by  GNAMPA-INdAM  Project 2022 "Equazioni differenziali alle derivate parziali in fenomeni non lineari"
and by  GNAMPA-INdAM  Project 2023 "Regolarità per problemi ellittici e parabolici con crescite non standard". 

The third author was supported by a doctoral training grant for 2022 and a travel grant from the V\"ais\"al\"a Fund.

\end{document}